\newtheorem{lemma}{Lemma}[section]
\newtheorem{theorem}{Theorem}[section]
\newtheorem{proposition}{Proposition}[section]
\numberwithin{equation}{section}
\newcommand{\dis}{\displaystyle}
\newcommand{\R}{\mathbb{R}}
\newcommand{\al}{\alpha}
\newcommand{\be}{\beta}
\newcommand{\ga}{\gamma}
\newcommand{\la}{\lambda}
\newcommand{\si}{\sigma}
\newcommand{\pa}{\partial}
\newcommand{\ka}{\kappa}
\newcommand{\eps}{\epsilon}
\newcommand{\lag}{\langle}
\newcommand{\rag}{\rangle}
\begin{document}

\title[Stability of rarefaction waves of Navier-Stokes-Poisson system]{Stability of rarefaction waves of the Navier-Stokes-Poisson system}

\author[R.-J. Duan]{Renjun Duan}
\address[RJD]{Department of Mathematics, The Chinese University of Hong Kong,
Shatin, Hong Kong, P.R.~China}
\email{rjduan@math.cuhk.edu.hk}

\author[S.-Q. Liu]{Shuangqian Liu}
\address[SQL]{Department of Mathematics, Jinan University, Guangzhou 510632, P.R.~China\\
and Department of Mathematics, The Chinese University of Hong Kong,
Shatin, Hong Kong, P.R.~China}
\email{tsqliu@jnu.edu.cn}


\maketitle
\begin{abstract}
In the paper we are concerned with the large time behavior of solutions to the one-dimensional Navier-Stokes-Poisson system in the case when the potential function of the self-consistent electric field may take distinct constant states at $x=\pm\infty$. Precisely, it is shown that if initial data are close to a constant state with asymptotic values at far fields chosen such that the Riemann problem on the corresponding quasineutral Euler system admits a  rarefaction wave whose strength is not necessarily small, then the solution exists for all time and tends to the rarefaction wave as $t\to+ \infty$. The construction of the nontrivial large-time profile of the potential basing on the quasineutral  assumption plays a key role in the  stability analysis. The proof is based on the energy method by taking into account the effect of the self-consistent electric field on the viscous compressible fluid.
\end{abstract}


\tableofcontents
\thispagestyle{empty}
\section{Introduction}
The two-fluid Navier-Stokes-Poisson (denoted as NSP in the sequel) system is often used to describe the transport of charged particles under the influence of the self-consistent electrostatic potential force arising from the study of the collisional dusty plasma, cf.~\cite{GSK,KG}.
In one-dimensional space, it takes the form of
\begin{equation}\label{NSP2}
\left\{\begin{array}{l}
\pa_tn_i+\pa_x(n_i u_i)=0,\\[3mm]
m_in_i(\pa_t u_i+u_i\pa_x u_i)+T_i\pa_x n_i-n_i\pa_x\phi=\mu_i \pa_x^2u_i,\\[3mm]
\pa_tn_e+\pa_x(n_e u_e)=0,\\[2mm]
m_en_e(\pa_t u_e+u_e\pa_x u_e)+T_e\pa_x n_e+n_e\pa_x\phi=\mu_e \pa_x^2u_e,\\[3mm]
\pa_x^2\phi=n_i-n_e,\quad t>0,\ x\in \R.
\end{array}
\right.
\end{equation}
Initial data are given by
\begin{equation}\label{I.D.2}
[n_\al, u_\al](0,x)=[n_{\al0}(x),u_{\al0}(x)],\quad \al=i,e,\  \ x\in\R,
\end{equation}
with
\begin{equation}
\label{I.D.2-c}
\lim\limits_{x\rightarrow\pm\infty}[n_{\al0},u_{\al0}](x)=[n_{\pm},u_{\pm}],\quad \al=i,e.
\end{equation}
The boundary values of $\phi$ at infinity are set by
\begin{equation}
\label{bd2}
\lim\limits_{x\rightarrow\pm\infty}\phi(t,x)=\phi_\pm,\quad t\geq 0.
\end{equation}
Here, $n_\al=n_\al(t,x)>0$ and $u_\al=u_\al(t,x)$ are the density and velocity of ions $(\al=i)$ and electrons $(\al=e)$, respectively, and  $\phi=\phi(t,x)$ is the self-consistent potential. The  positive constants $m_\al>0$, $T_\al>0$ and   $\mu_\al>0$ denote respectively the mass, the absolute temperature and the viscosity coefficient of $\al$-fluid. Constant states $[n_\pm,u_\pm,\phi_\pm]$ at infinity can be distinct. Particularly, we allow for the appearance of nonzero difference of potentials at $x=\pm\infty$, i.e.~$\phi_+-\phi_-\neq 0$.

It is interesting to study the large-time behavior of solutions to the Cauchy problem on the complex NSP system \eqref{NSP2}, \eqref{I.D.2}, \eqref{I.D.2-c}, \eqref{bd2} in the case when $[n_-,u_-,\phi_-]\neq [n_+,u_+,\phi_+]$. In the paper we are concerned with the time-asymptotic stability of the rarefaction wave (cf.~\cite{Da,S}) constructed by the corresponding quasineutral Euler system
\begin{equation}\label{Euler2}
\left\{\begin{array}{l}
\dis \pa_tn+\pa_x(n u)=0,\\[3mm]
\dis n(\pa_t u+u\pa_x u)+\frac{T_i+T_e}{m_i+m_e}\pa_x n=0,
\end{array}
\right.
\end{equation}
with the potential function $\phi$ in large time determined by
\begin{equation}
\label{phi2}
\phi=\frac{T_im_e-T_em_i}{m_i+m_e} \ln n.
\end{equation}
System \eqref{Euler2} can be formally obtained from  \eqref{NSP2}  by letting $n_i=n_e=n$, $u_i=u_e=u$, taking the sum of two momentum equations and neglecting viscosity terms, and the relationship \eqref{phi2} can be deduced by further taking the difference of  two momentum equations, neglecting viscosity terms, and using the quasineutral momentum equation in  \eqref{Euler2}. Therefore, we need to postulate the following compatibility condition on data $[n_\pm,\phi_\pm]$ at infinity
\begin{equation}
\label{cc2}
\phi_\pm=\frac{T_im_e-T_em_i}{m_i+m_e} \ln n_\pm.
\end{equation}
Notice from \eqref{cc2} that if $T_i/m_i\neq T_e/m_e$ then the distinct $n_\pm$ can yield the distinct $\phi_\pm$. Precisely, we expect to show
\begin{equation*}
n_\al (t,x)\to n^R(x/t),\quad u_\al (t,x)\to u^R(x/t),\quad \al=i,e,
\end{equation*}
and
\begin{equation*}
\phi(t,x)\to  \phi^R (x/t):=\frac{T_im_e-T_em_i}{m_i+m_e} \ln n^R(x/t),
\end{equation*}
uniformly for $x\in \R$ as $t$ goes to infinity, provided that initial data $[n_{\al0}(x),u_{\al0}(x)]$ approach $[n_\pm,u_\pm]$ as $x\to \pm \infty$  in a suitably close way, where  $[n^R(x/t),u^R(x/t)]$  is the rarefaction-wave solution to the corresponding Riemann problem on the quasineutral Euler system  \eqref{Euler2}.


In fact, there have been a huge number of papers in the literature to study of the stability of wave patterns, namely, shock wave, rarefaction wave, contact discontinuity and their compositions, in the context of gas dynamical equations and related kinetic equations. Among them, we only mention \cite{Go,HLM,HXY,LYYZ,LY,MN86,MN-S,Y} and reference therein.  Moreover, we would also point out some previous works only related to the current work concerning the stability of rarefaction waves. The problem was firstly studied by Matsumura-Nishihara \cite{MN86,MN92} for the one-dimensional  compressible Navier-Stokes equations in the isentropic case. It was later extent by Liu-Xin \cite{LX} to the heat-conductive case. Since then, there have been extensive studies in connection with considerations of different aspects, such as appearance of boundaries \cite{KZ,M}, dependence of viscosity on density \cite{JQZ}, other kinds of relative models \cite{KT, LYYZ}, and so on.

Another interesting model is the Navier-Stokes system coupled with the Poisson equation through the self-consistent force arising either from modelings of self-gravitational viscous gaseous stars (cf.~\cite{Chan}) or from the simulation of the motion of charged particles in semiconductor devices (cf.~\cite{MRC}). The coupling system at the fluid level can be also justified by taking the hydrodynamical limit of the Vlasov-type Boltzmann equation by the Chapman-Enskog expansion, cf.~\cite{CC,Gr,G,GJ}.   In recent years, the study of the NSP system has attracted a lot of attentions from many people. In what follows we would also mention some of them related to our interest. In the case when the background profile is vacuum, the existence of nontrivial stationary solutions with compact support and their dynamical stability related to a free-boundary value problem for the three-dimensional NSP system were discussed in  Ducomet \cite{Du}. Global existence of weak solutions to the Cauchy problem with large initial data was proved by Donatelli \cite{D} and the quasineutral limit in such framework was studied in  Donatelli-Marcati \cite{DM} by using some dispersive estimates of Strichartz type. We remark that some nonexistence result of global weak solutions was also very recently obtained in Chae \cite{Chae}.    Large-time behavior of the spherically symmetric NSP  system with degenerate viscosity coefficients and with vacuum in three dimensions was obtained in Zhang-Fang \cite{ZF}. The linear and nonlinear
dynamical instability for the Lane-Emden solutions in the framework of the
NSP system in three dimensions  was investigated in Jang-Tice \cite{JT} under some condition on the adiabatic exponent. Tan-Yang-Zhao-Zou \cite{TYZZ} established the global strong solution to the one-dimensional non-isentropic NSP system with large data for density-dependent viscosity.  In the case when the background profile is strictly positive, the global existence and convergence rates for the three-dimensional NSP system around a non-vacuum constant state were studied by Li-Matsumura-Zhang \cite{LMZ} through the construction of Green's functions. Interested readers may refer to the survey paper Hsiao-Li \cite{HL}  and reference therein for the perturbation theory related to the NSP system; see also \cite{D-NSM} for the study of the more complicated Navier-Stokes-Maxwell system.

However, to the best of our knowledge, even though there are many studies on the NSP system mentioned above, so far there are few mathematical results to clarify the nonlinear stability of wave patterns. One of the main mathematical difficulties comes from the effect of the self-consistent force on the viscous compressible fluid, since the force generally may not be expected to be $L^2$ integrable in space and time, cf.~\cite{LMZ}. Recently, Duan-Yang \cite{DY} proved the stability of rarefaction wave and boundary layer for outflow problem on the two-fluid NSP  equations. The convergence rate of corresponding solutions toward the stationary solution was obtained in Zhou-Li \cite{ZL}.  We point out that due to the techniques of the proof, it was assumed in \cite{DY} that all physical parameters in the model must be unit, particularly $m_i=m_e$ and $T_i=T_e$, which is obviously unrealistic since ions and electrons generally have different masses and temperatures.  One key point used in \cite{DY} is that the large-time {behavior} of the electric potential is trivial and hence the two fluids indeed have the same asymptotic profiles which are constructed from the Navier-Stokes equations without any force instead of the quasineutral system that we will make use of in the paper.

Motivated by \cite{GHR,GP} for the study of the inviscid Euler-Poisson equations, we will be firstly concerned with the motion of the one viscous fluid of ions under the Boltzmann relation.  It will be seen from Theorem \ref{main.res.} that the rarefaction wave of the quasineutral Euler system is stable under small perturbation, and particularly, the potential $\phi(t,x)$ has the nontrivial large-time behavior. Compared to the classical Navier-Stokes system without any force, the main difficulty in the proof for the NSP system is to treat the estimates on those terms related to the potential function $\phi$ as mentioned above. Since the large-time {behavior} of $\phi$ has a slow time-decay rate and the strength of rarefaction waves is not necessarily small, it is quite nontrivial to estimate the coupling term $-n \pa_x\phi$. The key point to overcome the difficulty is to use the good dissipative property from  the Poisson equation by expanding $e^{-\phi}$ around the asymptotic profile up to the third-order. In the two-fluid case, the situation is more complicated since the dissipation of the system  becomes much weaker than that in the case of one-fluid ions. We find that the trouble term turns out to be controlled by taking the difference of two momentum equations with different weights so as to balance the different masses $m_\al$ $(\al=i,e)$ of fluids. Finally we point out that it could be expected to use the developed techniques in this paper to pursuit the proof of the stability of smooth traveling waves.

The rest of the paper is organized as follows. In Section 2, we present the nonlinear stability of rarefaction waves for the NSP system in the relatively simple case when only the ions flow is taken into account under the Boltzmann relation. Precisely, we first construct a smooth approximation of the rarefaction wave, reformulate the equations around the smooth asymptotic profile, and then show the global a priori estimates on the solutions so as to obtain the main result. The stability of rarefaction waves for the more physical two-fluid model \eqref{NSP2} is dealt with in a similar way in Section 3. The symmetric structure in the two-fluid case plays a key in the proof. For convenience of readers, the linear dissipative structure of the system for the model considered in Section 2 is also analyzed in the appendix in order to understand the energy estimate around the nontrivial profile.

\medskip
\noindent {\it Notations.} Throughout this paper,  $C$ denotes some generic positive (generally large) constant and $\la$ denotes some generic positive (generally small) constant, where both $C$ and $\la$ may take different values in different places. $D\lesssim E$ means that  there is a generic constant $C>0$
such that $D\leq CE$. $D\sim E$
means $D\lesssim E$ and $E\lesssim D$. $\|\cdot\|_{L^p}$ $(1\leq p\leq+\infty)$
stands for the $L_x^p-$norm. Sometimes, for convenience, we use $\|\cdot\|$ to denote $L_x^2-$norm, and use $(\cdot,\cdot)$ to denote the inner product in $L^2_x$.
We also use $H^{k}$ $(k\geq0)$ to denote the usual Sobolev space with respect to $x$ variable.

\section{One-fluid case}

In this section we study the stability of rarefaction waves for the NSP system in the case of single ions flow under the Boltzmann relation. The main motivation for taking into account the ions flow first is that the structure of the coupling system is relatively simpler than that in the two-fluid case given in \eqref{NSP2} so that the analysis for the stability of rarefaction waves in the one-fluid case can shed light on the one in the more complex two-fluid case. Of course, the model for the single viscous flow for ions also has its own interest, particularly in the study of smooth traveling waves, cf.~\cite{GHR,KG}.

After a suitable normalization, the simplified system in the case of single ions flow reads
\begin{equation}\label{NSP}
\left\{\begin{array}{l}
\pa_tn+\pa_x(n u)=0,\\[3mm]
{n(\pa_t u+u\pa_x u)+A\pa_xn-n \pa_x\phi=\pa_x^2u},\\[3mm]
\pa_x^2\phi=n-e^{-\phi}, \quad t>0,\ x\in \R,
\end{array}
\right.
\end{equation}
with
\begin{equation}
\label{con.phi}
\lim\limits_{x\rightarrow\pm\infty}\phi(t,x)= \phi_\pm, \quad t\geq 0.
\end{equation}
Initial data are given by
\begin{eqnarray}\label{I.D.}
\left\{\begin{array}{rll}
&&{[n, u](0,x)=[n_0,u_0](x)},\ \ x\in\R,\\[2mm]
&&{\lim\limits_{x\rightarrow\pm\infty}[n_0,u_0](x)=[n_{\pm},u_{\pm}]}.
\end{array}
\right.
\end{eqnarray}
Here $n=n(t,x)>0,$ and $u=u(t,x)$ are the density and velocity of ions, and $\phi=\phi(t,x)$ is the potential of the self-consistent electric field. The  positive constant $A>0$ is the absolute temperature of ions.
Note from the Poisson equation in \eqref{NSP} that the Boltzmann relation $n_e=e^{-\phi}$ has been assumed, which is physically due to the fact that lighter electrons get close to the equilibrium state in much faster rate than heavier ions in plasma, cf.~\cite{Ch,HK2}. We remark that  $n_e=e^{-\phi}$ can be formally derived from the fourth equation of the two-fluid model \eqref{NSP2} by letting $m_e=0=\mu_e$ {and $T_e=1$}. Throughout this section, we also assume that  the quasineutral condition
\begin{equation}
\label{con.n}
n_\pm=e^{-\phi_\pm}
\end{equation}
at $x=\pm\infty$ holds true. Notice that $n_\pm$ and hence $\phi_\pm$ can be distinct.

\subsection{Approximate rarefaction waves}

In general, whenever $\phi$ is a nontrivial potential in large time and its second-order derivative $\pa_x^2\phi$ decays in time faster than other low-order terms, one may expect that  the NSP system \eqref{NSP} tends to  the following quasineutral Euler equation:
\begin{eqnarray}\label{ME.}
\left\{\begin{array}{rll}
&&\pa_t\overline{n}+\pa_x(\overline{n}~\overline{u})=0,\\[2mm]
&&{\overline{n}(\pa_t \overline{u}+\pa_x \overline{u})+A\pa_x\overline{n}-\overline{n}\pa_x\overline{\phi}=0},\\[2mm]
&&\overline{\phi}=-\ln\overline{n}.
\end{array}
\right.
\end{eqnarray}
Specifically, the large time behavior of solutions to the Cauchy problem \eqref{NSP}, \eqref{con.phi}, \eqref{I.D.} and \eqref{con.n} of  the NSP system is expected to be determined by the Riemann problem for the above quasineutral Euler equation with initial data
\begin{eqnarray*}
\begin{array}{rll}
[\overline{n}, \overline{u}](0,x)&&=[\overline{n}_0,\overline{u}_0](x)=\left\{\begin{array}{rll}[n_-,u_-],&\ \ x>0,\\[3mm]
[n_+,u_+],&\ \ x<0,
\end{array}
\right.\\[3mm]
\overline{\phi}(0,x)&&=\overline{\phi}_0=-\ln \overline{n}_0.
\end{array}
\end{eqnarray*}
It is easy to see (cf.~\cite{Da,S}) that \eqref{ME.}
have two characteristics
\begin{equation}
\label{1cha}
\la_1=u-c,\ \ \la_2=u+c,\quad \text{with}\ c=\sqrt{A+1},
\end{equation}
which are genuinely nonlinear and can give rise to the rarefaction waves:
\begin{equation*}
R_1(n_-,u_-)\equiv\bigg\{{[n,u]}\in\R_+\times\R\ \Big|\ u+c\ln n=u_-+c\ln n_-,
n<n_-, u>u_-\bigg\},
\end{equation*}
or
\begin{equation}\label{def.r2}
R_2(n_-,u_-)\equiv\bigg\{{[n,u]}\in\R_+\times\R\ \Big|\ u-c\ln n=u_--c\ln n_-,n>n_-,\ u>u_-\bigg\}.
\end{equation}

In what follows we construct a smooth approximation of the rarefaction waves corresponding to the solution to the Riemann problem \eqref{ME.} on the quasineutral Euler system. As in \cite{MN86},
consider the Riemann problem on the Burgers' equation
\begin{eqnarray}\label{Re.p}
\left\{\begin{array}{rll}
\begin{split}
w_t+ww_x&=0,\\
w(x,0)&=w_0=\left\{\begin{array}{rll}
w_-,\ x&<0,\\[3mm]
w_+,\ x&>0,\end{array}\right.
\end{split}
\end{array}\right.
\end{eqnarray}
for $w_-<w_+$. It is well-known that \eqref{Re.p} admits a continuous weak solution $w^{R}(x/t)$ connecting $w_-$ and
$w_+$, taking the form of
\begin{equation*}
w^{R}(x/t)=\left\{\begin{array}{ll}
w_-&,\ x\leq w_-t,\\[3mm]
\dis \frac{x}{t}&,\ w_-t<x<w_+t,\\[3mm]
w_+&,\ w_+t\leq x.
\end{array}\right.
\end{equation*}
Let $[n^{R},u^{R}](x/t)$ be defined as
\begin{equation}\label{app.sol.}
w^R(x/t)=u^{R}+c,\ \ u^R-c\ln n^R=u_--c\ln n_-,
\end{equation}
with $w_-=u_-+c$ and $w_+=u_++c$.
Then by a direct calculation, $[n^{R},u^{R}](x/t)$ satisfies the following Riemann problem on the Euler equations
\begin{eqnarray*}
\left\{\begin{array}{rll}
&&\pa_t\overline{n}+\pa_x(\overline{n}~\overline{u})=0,\\[2mm]
&&{\overline{n}\pa_t (\overline{u}+\pa_x \overline{u})+c^2\pa_x\overline{n}}=0,\\[2mm]
&&[\overline{n},\overline{u}](x,0)=[\overline{n}_0,\overline{u}_0]=\left\{
\begin{array}{ll}
[n_-,u_-]&,\ \ x<0,\\[3mm]
 {[n_+,u_+]} &,\ \ x>0.
 \end{array}\right.
\end{array}
\right.
\end{eqnarray*}
In fact, the smooth Euler $i$-th $(i=1,2)$ rarefaction waves can be constructed along any given $R_i$ curve when the $i$-th characteristic satisfies the inviscid Burgers
equation
$$
\pa_t\la_i+\la_i\pa_x\la_i=0,
$$
with increasing data. Here we refer to the construction introduced in \cite{MN86}
for initial data whose spatial derivative is  proportional to the
parameter $\eps>0$, i.e., $w^R(x/t)$ is approximated by a smooth function $\overline{w}(x,t)$ satisfying
\begin{equation}\label{cl.Re.cons.}
\left\{\begin{array}{rll}
\begin{split}
\pa_tw+w\pa_xw&=0,\\
w(0,x)&=w_0(x)=\frac{1}{2}(w_{+}+w_{-})+\frac{1}{2}(w_{+}-w_{-})\tanh (\eps x).
\end{split}
\end{array}\right.
\end{equation}
Then one has

\begin{lemma}\label{cl.Re.Re.}
Let $\delta=w_+-w_->0$ be the wave strength of the $2$-rarefaction wave. Then the problem \eqref{cl.Re.cons.} has a unique smooth solution
$\overline{w}(t,x)$ which satisfies the following properties:

\noindent$(i)$ $w_-<\overline{w}(t,x)<w_+$, $\pa_x\overline{w}>0$ for $x\in\R$ and $t\geq0$.

\noindent$(ii)$ For any $1\leq p\leq+\infty$, there exists a constant $C_p$ such that for $t>0$,
\begin{eqnarray*}
&&\|\pa_x\overline{w}\|_{L^p}\leq C_p\min\{\delta\eps^{1-1/p}, \delta^{1/p}t^{-1+1/p}\},\\
&&\|\pa^2_x\overline{w}\|_{L^p}\leq C_p\min\{\delta\eps^{2-1/p}, \eps^{1-1/p}t^{-1}\},\\
&&\|\pa^3_x\overline{w}\|_{L^p}\leq C_p\min\{\delta\eps^{3-1/p}, \eps^{2-1/p}t^{-1}\}.
\end{eqnarray*}

\noindent$(iii)$ $\lim\limits_{t\rightarrow+\infty}\sup\limits_{x\in\R}\left|\overline{w}(t,x)-w^R(x/t)\right|=0$.

\end{lemma}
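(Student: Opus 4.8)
The plan is to solve the Burgers initial value problem \eqref{cl.Re.cons.} by the method of characteristics and then to read off the three listed properties directly from the explicit structure of the solution and of the initial datum $w_0(x)=\frac12(w_++w_-)+\frac12\delta\tanh(\eps x)$, which is smooth, strictly increasing, and bounded between $w_-$ and $w_+$.

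First I would note that because $w_0$ is smooth and increasing with $w_0'=\frac12\delta\eps\,\mathrm{sech}^2(\eps x)>0$, the characteristics $x=x_0+w_0(x_0)t$ never cross for $t\ge0$, so the classical solution $\overline{w}(t,x)$ exists globally, is unique and smooth, and is given implicitly by $\overline{w}(t,x)=w_0(x_0)$ where $x_0=x_0(t,x)$ solves $x=x_0+w_0(x_0)t$. Since $\overline{w}$ is constant along characteristics and $w_0$ takes values strictly in $(w_-,w_+)$, part $(i)$ follows immediately: $w_-<\overline{w}(t,x)<w_+$, and differentiating the implicit relation gives $\pa_x\overline{w}=\frac{w_0'(x_0)}{1+t\,w_0'(x_0)}>0$.

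For part $(ii)$ I would work from $\pa_x\overline{w}=\dfrac{w_0'(x_0)}{1+t\,w_0'(x_0)}$ and its $x$-derivatives. The two competing bounds in each line come from the two obvious estimates of this quotient: the ``$\eps$'' bound by dropping the $t$-term in the denominator (so that $\pa_x\overline{w}\le w_0'(x_0)\le C\delta\eps$, and higher derivatives scale with extra powers of $\eps$), and the ``$t$'' bound by instead retaining $1+t\,w_0'\ge t\,w_0'$ to extract decay in $t$. Taking the minimum of the two then yields the stated $\min\{\cdots\}$ forms. The $L^p$ norms are obtained by the change of variables $x\mapsto x_0$, whose Jacobian is $dx=(1+t\,w_0'(x_0))\,dx_0$; combined with the pointwise bounds and the fast (exponential) decay of $w_0'$ and its derivatives away from the origin, the resulting integrals converge and give the claimed powers of $\delta$, $\eps$ and $t$. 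I would do one of the three lines—say the $\pa_x\overline{w}$ estimate—in detail and indicate that the $\pa_x^2\overline{w}$ and $\pa_x^3\overline{w}$ estimates follow by repeatedly differentiating the quotient and bookkeeping the powers. The main obstacle is precisely this bookkeeping: each differentiation in $x$ brings a factor involving $\pa_x x_0=1/(1+t\,w_0')$ together with higher derivatives of $w_0$, so one must track carefully how many powers of $(1+t\,w_0')^{-1}$ appear in the numerator versus denominator to land exactly on the exponents $1-1/p$, $2-1/p$, $3-1/p$ (in $\eps$) and the correct $t^{-1+1/p}$ or $t^{-1}$ decay.

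Finally, for part $(iii)$ I would compare $\overline{w}(t,x)$ with the exact rarefaction profile $w^R(x/t)$ directly. On the constant states $x\le w_-t$ and $x\ge w_+t$ the difference decays because $w_0$ approaches $w_\pm$ exponentially fast and the characteristics foliating these regions emanate from $|x_0|\to\infty$; in the fan $w_-t<x<w_+t$ one has $w^R(x/t)=x/t$, and from $x=x_0+\overline{w}t$ one gets $\overline{w}(t,x)-x/t=-x_0/t\to0$ uniformly since the relevant $x_0$ stays bounded relative to $t$. Assembling these three regimes gives $\sup_x|\overline{w}(t,x)-w^R(x/t)|\to0$ as $t\to+\infty$, which is $(iii)$. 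I would remark that this is exactly the convergence established in \cite{MN86}, so one may also simply invoke that reference.
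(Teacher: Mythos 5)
The paper never proves this lemma: it is quoted verbatim from the construction of Matsumura--Nishihara \cite{MN86}, so there is no in-paper argument to compare against, and your characteristics-based reconstruction is indeed the standard proof behind that citation. Parts $(i)$ and $(iii)$ are correct as you sketch them: monotonicity of $w_0$ gives global non-crossing of characteristics, hence a unique global smooth solution with $\pa_x\overline{w}=w_0'(x_0)/(1+t\,w_0'(x_0))>0$, and for $(iii)$ the fan identity $\overline{w}(t,x)-x/t=-x_0/t$ together with the fact that $|x_0|=O(\eps^{-1}\log t)$ inside the fan (and the exponential approach of $w_0$ to $w_\pm$ outside it) gives the uniform convergence.

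There is, however, one genuine gap in your plan for part $(ii)$, and it is not the innocuous ``bookkeeping'' you defer: for the second and third derivatives, the two naive pointwise bounds you propose do not land on the stated exponents. Concretely, $\pa_x^2\overline{w}=w_0''(x_0)\,(1+t\,w_0'(x_0))^{-3}$, and if you use only $|w_0''|\le 2\eps\,w_0'$ together with $1+t\,w_0'\ge t\,w_0'$, the change of variables $dx=(1+t\,w_0')\,dx_0$ yields $\|\pa_x^2\overline{w}\|_{L^p}^p\le C\eps^{p-1}t^{-(p-1)}\int_\R|w_0''|\,dx_0\lesssim \delta\,\eps^{p}\,t^{-(p-1)}$, i.e.\ $\|\pa_x^2\overline{w}\|_{L^p}\lesssim \delta^{1/p}\eps\, t^{-1+1/p}$, which for large $t$ is strictly weaker than the claimed $\eps^{1-1/p}t^{-1}$ (the ratio is $(\delta\eps t)^{1/p}$). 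To get the sharp bound one must exploit the explicit profile $w_0'(x_0)=\tfrac{1}{2}\delta\eps\,\mathrm{sech}^2(\eps x_0)$, not merely its size and integrability. A clean repair: prove the two endpoints and interpolate. The $L^\infty$ endpoint $\|\pa_x^2\overline{w}\|_{L^\infty}\le C\eps\,t^{-1}$ is immediate from $w_0'/(1+t\,w_0')\le t^{-1}$; for the $L^1$ endpoint, $\|\pa_x^2\overline{w}\|_{L^1}\le 2\eps\int_\R w_0'\,(1+t\,w_0')^{-2}\,dx_0$, and the last integral is $\le C(t\eps)^{-1}$ --- seen either by the substitution $s=\tanh(\eps x_0)$, or by splitting at the points where $t\,w_0'=1$ (on the inner region bound the integrand by $t^{-2}(w_0')^{-1}$ and use $\int\cosh^2$, on the outer region by $w_0'$ and use the exponential tail). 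Then $\|\pa_x^2\overline{w}\|_{L^p}\le\|\pa_x^2\overline{w}\|_{L^1}^{1/p}\|\pa_x^2\overline{w}\|_{L^\infty}^{1-1/p}\le C\eps^{1-1/p}t^{-1}$, and the same scheme (with $|w_0'''|\le C\eps^2w_0'$ and the extra term $3t(w_0'')^2(1+t\,w_0')^{-5}$ from differentiating the quotient) gives the third-derivative line. With this correction your argument is complete and coincides with the proof in \cite{MN86}.
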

We now define the approximate solution of $[n^R,u^R]$ constructed by \eqref{app.sol.} and \eqref{cl.Re.cons.} as
$[n^{r}, u^{r}]=[n^{r}, u^{r}](t,x)$ in terms of
\begin{eqnarray*}
\left\{\begin{array}{rll}
&&\overline{w}(t+1,x)=u^{r}(t,x)+c, \\[2mm]
&&u^{r}(t,x)-c\ln n^{r}(t,x)=u_--c\ln n_-,
\end{array}
\right.
\end{eqnarray*}
together with $w_-=u_-+c$ and $w_+=u_++c$. We also point out  that $[n^{r}, u^{r}]$ satisfies
\begin{eqnarray}\label{Re.ME2.}
\left\{\begin{array}{rll}
&&\pa_t\overline{n}+\pa_x(\overline{n}~\overline{u})=0,\\[2mm]
&&{\overline{n}\pa_t (\overline{u}+ \pa_x\overline{u})+A\pa_x\overline{n}-\overline{n}\pa_x\overline{\phi}=0},
\\[2mm]
&&\overline{\phi}=-\ln\overline{n},\\[2mm]
&&[\overline{n},\overline{u}](0,\pm\infty)=[n_\pm, u_\pm].
\end{array}
\right.
\end{eqnarray}
With Lemma \ref{cl.Re.Re.} in hand, one has the following result concerning $[n^{r}, u^{r}]$.

\begin{lemma}\label{cl.Re.Re2.}
The approximate smooth solution $[n^{r}, u^{r}]$  given by \eqref{app.sol.} and \eqref{cl.Re.cons.}
satisfies the following properties:

\noindent$(i)$ $\pa_xn^{r}>0$, $\pa_xu^{r}>0$ and $n_-<n^{r}(t,x)<n_+$,
$u_-<u^{r}(t,x)<u_+$ for $x\in\R$ and
$t\geq0$.

\noindent$(ii)$ For any $1\leq p\leq+\infty$, there exists a constant $C_p$ such that for all $t>0$,
\begin{eqnarray*}
&&\|\pa_x[n^{r}, u^{r}]\|_{L^p}\leq C_p\min\{\delta_r\eps^{1-1/p}, \delta_r^{1/p}t^{-1+1/p}\},\\
&&\|\pa^2_x[n^{r}, u^{r}]\|_{L^p}\leq C_p\min\{\delta_r\eps^{2-1/p}, \eps^{1-1/p}t^{-1}\},\\
&&\|\pa^3_x[n^{r}, u^{r}]\|_{L^p}\leq C_p\min\{\delta_r\eps^{3-1/p}, \eps^{2-1/p}t^{-1}\},
\end{eqnarray*}
where $\delta_r=|n_+-n_-|+|u_+-u_-|$.

\noindent$(iii)$ $\lim\limits_{t\rightarrow+\infty}\sup\limits_{x\in\R}\left|[n^{r}, u^{r}](t,x)-[n^R, u^R](t,x)\right|=0$.

\end{lemma}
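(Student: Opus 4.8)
The plan is to derive the properties of $[n^r,u^r]$ by transporting the corresponding facts about $\overline{w}$ from Lemma~\ref{cl.Re.Re.} through the explicit algebraic change of variables in \eqref{app.sol.} and \eqref{cl.Re.cons.}. The starting observation is that the defining relations
\begin{equation*}
\overline{w}(t+1,x)=u^{r}(t,x)+c,\qquad u^{r}(t,x)-c\ln n^{r}(t,x)=u_--c\ln n_-,
\end{equation*}
express $[n^r,u^r]$ as smooth, monotone functions of the single scalar $\overline{w}$. Indeed from the first relation $u^{r}=\overline{w}(t+1,\cdot)-c$, so $u^r$ inherits its range and monotonicity directly; from the second, $c\ln n^{r}=u^r-u_-+c\ln n_-$, hence $n^{r}=n_-\exp\!\big((u^r-u_-)/c\big)$, which is an increasing function of $u^r$ and therefore of $\overline{w}$.

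For part $(i)$, I would simply substitute. Since $w_-<\overline{w}(t+1,x)<w_+$ with $w_\pm=u_\pm+c$, subtracting $c$ gives $u_-<u^{r}(t,x)<u_+$. Feeding the endpoints into the exponential relation $n^{r}=n_-\exp((u^r-u_-)/c)$ and using \eqref{app.sol.} to identify $n_-\exp((u_+-u_-)/c)=n_+$ yields $n_-<n^{r}(t,x)<n_+$. For the signs of the derivatives, differentiate in $x$: $\pa_xu^{r}=\pa_x\overline{w}(t+1,\cdot)>0$ by Lemma~\ref{cl.Re.Re.}$(i)$, and $\pa_xn^{r}=(n^{r}/c)\,\pa_xu^{r}>0$ since $n^r>0$ and $c>0$.

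For part $(ii)$, the point is that each spatial derivative of $[n^r,u^r]$ is a polynomial (with bounded coefficients, thanks to part $(i)$ which confines $n^r,u^r$ to compact intervals) in the derivatives $\pa_x\overline{w},\pa_x^2\overline{w},\pa_x^3\overline{w}$. Concretely $\pa_x u^r=\pa_x\overline w$, $\pa_x^2u^r=\pa_x^2\overline w$, $\pa_x^3 u^r=\pa_x^3\overline w$, while $\pa_x n^r=(n^r/c)\pa_x\overline w$, $\pa_x^2 n^r=(n^r/c)\pa_x^2\overline w+(n^r/c^2)(\pa_x\overline w)^2$, and similarly $\pa_x^3 n^r$ is a sum of terms $\pa_x^3\overline w$, $(\pa_x\overline w)(\pa_x^2\overline w)$ and $(\pa_x\overline w)^3$ times bounded factors. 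One then invokes the $L^p$ bounds of Lemma~\ref{cl.Re.Re.}$(ii)$ termwise; the genuinely nonlinear contributions such as $\|(\pa_x\overline w)^2\|_{L^p}=\|\pa_x\overline w\|_{L^{2p}}^{2}$ must be checked to be dominated by the claimed right-hand sides. This bookkeeping — verifying that each product of lower-order derivative norms is bounded by $\min\{\delta_r\eps^{k-1/p},\ \eps^{k-1-1/p}t^{-1}\}$ for the appropriate order $k$ — is the main technical obstacle, though it is routine: the two competing bounds in Lemma~\ref{cl.Re.Re.} were engineered precisely so that the products of the quadratic and cubic terms fall under the same envelopes, and one uses $\delta_r\sim\delta$ together with $\eps\le 1$ to absorb constants.

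Finally part $(iii)$ follows from Lemma~\ref{cl.Re.Re.}$(iii)$ and the construction \eqref{app.sol.}: as $t\to+\infty$, $\overline{w}(t+1,\cdot)$ converges uniformly to $w^{R}(x/t)$, so $u^{r}(t,x)=\overline w(t+1,x)-c$ converges uniformly to $u^{R}(x/t)=w^R(x/t)-c$, and by the uniform continuity of the bounded map $u\mapsto n_-\exp((u-u_-)/c)$ on the compact interval $[u_-,u_+]$, $n^{r}(t,x)$ converges uniformly to $n^{R}(x/t)$. Since $[n^r,u^r]$ and $[n^R,u^R]$ satisfy the same algebraic profile relations, this gives $\lim_{t\to+\infty}\sup_{x\in\R}|[n^{r},u^{r}](t,x)-[n^R,u^R](t,x)|=0$, completing the proof.
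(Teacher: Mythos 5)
Your proposal is correct and follows exactly the route the paper intends: the paper states Lemma \ref{cl.Re.Re2.} without proof as an immediate consequence of Lemma \ref{cl.Re.Re.} (citing the construction of \cite{MN86}), and your transfer of the bounds through the algebraic relations $u^{r}=\overline{w}(t+1,\cdot)-c$ and $n^{r}=n_-\exp\left((u^{r}-u_-)/c\right)$, including the product-term bookkeeping via $\|\pa_x\overline{w}\|_{L^\infty}\le C\min\{\delta\eps,t^{-1}\}$, is precisely the standard argument being invoked. The only details worth pinning down are that constants may then depend on the (fixed, possibly large) wave strength $\delta_r$, and in part $(iii)$ the harmless time shift $w^{R}(x/(t+1))$ versus $w^{R}(x/t)$, handled by the Lipschitz continuity of $w^{R}$ in its self-similar variable.
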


\subsection{Stability of the rarefaction wave}
In this subsection, we study the stability of the approximate rarefaction wave $[n^{r}, u^{r}]$
for the Cauchy problem \eqref{NSP}, \eqref{con.phi}, \eqref{I.D.} and \eqref{con.n}. For this purpose, let us define the perturbation
$$
[\widetilde{n}, \widetilde{u}, \widetilde{\phi}](t,x)=[n-n^{r},u-u^{r}, \phi-\phi^{r}](t,x),
$$
where $\phi^{r}=-\ln n^{r}.$
Then $[\widetilde{n},\widetilde{u}, \widetilde{\phi}]$ satisfies
\begin{eqnarray}
&&\pa_t\widetilde{n}+\pa_x(n u)-\pa_x(n^ru^r)=0,\label{trho}\\
&&\pa_t \widetilde{u}+u\pa_x u-u^r\pa_x u^r+A\left(\frac{\pa_xn}{n}-\frac{\pa_xn^r}{n^r}\right)
-\pa_x\widetilde{\phi}=\frac{\pa_x^2u}{n},\label{tu}\\
&&\pa_x^2\widetilde{\phi}=\widetilde{n}+n^r\left(1-e^{-\widetilde{\phi}}\right)-\pa_x^2\phi^r.\label{tphy}
\end{eqnarray}
Initial data for $[\widetilde{n},\widetilde{u}]$ are given by
\begin{eqnarray}
&&[\widetilde{n},\widetilde{u}](0,x)=[\widetilde{n}_0,\widetilde{u}_0](x)=[n_0(x)-n^r(0,x),u_0(x)-u^r(0,x)].\label{tid}
\end{eqnarray}
Here it has been assumed that for any given $t\geq 0$, $\widetilde{\phi}(t,x)$ is determined in terms of the elliptic equation \eqref{tphy} under the boundary condition that $\widetilde{\phi}(t,x)$ tends to zero as $x$ goes to $\pm\infty$.

\begin{theorem}\label{main.res.}
Let ${[n_+, u_+]}\in R_2(n_-, u_-)$, where $R_2(n_-, u_-)$ is given in \eqref{def.r2} with $n_->0$ and $c=\sqrt{A+1}$. Assume $\phi_\pm=-\ln n_\pm$.
There are constants $\eps_0>0$, $0<\si<1$, and $C_0>0$ such that if
$$
\left\|[\widetilde{n}_0, \widetilde{u}_0]\right\|^2_{H^1}+\eps\leq \eps_0^2,
$$
where $\eps>0$ is the parameter appearing in \eqref{cl.Re.cons.}, then the Cauchy problem of
the NSP system \eqref{trho}, \eqref{tu}, \eqref{tphy} and \eqref{tid}
admits a unique global solution $[\widetilde{n},\widetilde{u},\widetilde{\phi}](t,x)$ satisfying
\begin{multline}\label{total.eng.}
\sup\limits_{t\geq0}\left\|\left[\widetilde{n}, \widetilde{u}, \widetilde{\phi}\right](t)\right\|^2_{H^1}
+\int_0^{+\infty}\left\|\sqrt{\pa_xu^r}\left[ \widetilde{u}, \pa_x\widetilde{n},\pa_x\widetilde{u}\right](t)\right\|^2dt
\\+\int_0^{+\infty}\left\|\left[\pa_x\widetilde{n}, \pa_x\widetilde{u}, \pa_x\widetilde{\phi}, \pa^2_x\widetilde{\phi}, \pa^2_x\widetilde{u}\right](t)\right\|^2dt
\leq C_0\eps_0^{\frac{2\sigma}{1+\sigma}}.
\end{multline}
Moreover, the solution to the original Cauchy problem \eqref{NSP}, \eqref{con.phi}, \eqref{I.D.} tends time-asymptotically to the rarefaction in the sense that
\begin{equation}\label{asp}
\lim\limits_{t\rightarrow+\infty}\sup\limits_{x\in\R}\left|\left[n(t,x)-n^{R}\left(\frac{x}{t}\right), u(t,x)- u^{R}\left(\frac{x}{t}\right), \phi(t,x)+\ln n^{R} \left(\frac{x}{t}\right)\right]\right|=0.
\end{equation}

\end{theorem}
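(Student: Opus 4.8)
The plan is to establish global existence by the standard local-existence-plus-continuation scheme, with the heart of the matter being a closed set of uniform a priori estimates, and then to deduce the uniform convergence \eqref{asp} from those estimates. Local-in-time existence and uniqueness for the reformulated problem \eqref{trho}--\eqref{tid}, with $\widetilde\phi(t,\cdot)$ recovered from the elliptic equation \eqref{tphy} under the vanishing boundary condition at $x=\pm\infty$, follow from a routine iteration argument, so it suffices to improve the a priori bound $\sup_{[0,T]}\|[\widetilde n,\widetilde u,\widetilde\phi](t)\|_{H^1}\le\chi$ for a small constant $\chi$. Throughout, the error terms produced by the fact that $[n^r,u^r]$ solves the quasineutral Euler system \eqref{Re.ME2.} rather than the full viscous NSP system carry factors $\pa_x^2[n^r,u^r]$ and $\pa_x^2\phi^r$, hence by Lemma \ref{cl.Re.Re2.}$(ii)$ are small in $\eps$ and integrable in time; these are the sources that give rise to the factor $\eps_0^{2\si/(1+\si)}$ on the right of \eqref{total.eng.}.

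For the zeroth-order estimate I would use the relative-entropy functional adapted to the Poisson coupling,
\[
\mathcal{E}(t)=\int_{\R}\Big[\tfrac12\,n\widetilde u^2+A\big(n\ln\tfrac{n}{n^r}-(n-n^r)\big)+\tfrac12|\pa_x\widetilde\phi|^2+n^r\big(e^{-\widetilde\phi}-1+\widetilde\phi\big)\Big]\,dx,
\]
which is coercive, $\mathcal{E}\sim\|[\widetilde n,\widetilde u,\widetilde\phi,\pa_x\widetilde\phi]\|^2$, since both the fluid relative entropy $A(n\ln(n/n^r)-(n-n^r))$ and the electron--field contribution $n^r(e^{-\widetilde\phi}-1+\widetilde\phi)$ are nonnegative and quadratic near the profile. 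Differentiating $\mathcal{E}$ along \eqref{trho}--\eqref{tphy}, I expect the viscosity to furnish the dissipation $\|\pa_x\widetilde u\|^2$, while the genuine nonlinearity of the $2$-rarefaction wave yields the positive weighted term $\|\sqrt{\pa_xu^r}\,\widetilde u\|^2$ because $\pa_xu^r>0$ by Lemma \ref{cl.Re.Re2.}$(i)$. The two potential contributions are tailored precisely so that, after substituting \eqref{tphy} and integrating by parts, the dangerous coupling term $-\int n\,\widetilde u\,\pa_x\widetilde\phi\,dx$ stemming from $-n\pa_x\phi$ is reorganized against $\tfrac{d}{dt}\int\big(\tfrac12|\pa_x\widetilde\phi|^2+n^r(e^{-\widetilde\phi}-1+\widetilde\phi)\big)$, leaving only $\widetilde n$-- and source--type remainders. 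Testing \eqref{tphy} against $\widetilde\phi$ then provides the Poisson dissipation $\|\pa_x\widetilde\phi\|^2$ together with $\|\widetilde\phi\|^2$, using $(1-e^{-\widetilde\phi})\widetilde\phi\ge\la\widetilde\phi^2$ for $\chi$ small, and $\|\pa_x^2\widetilde\phi\|^2$ is read off directly from \eqref{tphy} once $\widetilde n,\widetilde\phi$ are under control. At first order I would differentiate the equations in $x$ and test against $\pa_x[\widetilde n,\widetilde u]$, which yields $\|\pa_x^2\widetilde u\|^2$; the unweighted $\|\pa_x\widetilde n\|^2$ appearing in \eqref{total.eng.} is captured by the standard cross-term estimate obtained from testing \eqref{tu} against $\pa_x\widetilde n$, whereby the pressure term $A\pa_x\widetilde n/n$ produces $\la\|\pa_x\widetilde n\|^2$ modulo viscous and lower-order contributions.

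I expect the main obstacle to be exactly the step flagged in the introduction: the large-time potential $\widetilde\phi$ decays only slowly and the rarefaction strength $\de_r$ is not assumed small, so the nonlinearities generated by the electron pressure cannot be dismissed as harmless remainders. Concretely, the expansion
\[
n^r\big(1-e^{-\widetilde\phi}\big)=n^r\widetilde\phi-\tfrac12\,n^r\widetilde\phi^2+\tfrac16\,n^r\widetilde\phi^3-\cdots
\]
leaves quadratic and cubic corrections that must be kept in order to preserve the sign of the Poisson dissipation and to bound cubic interactions such as $\int\pa_xu^r\,\widetilde\phi^3\,dx$; carrying the expansion of $e^{-\widetilde\phi}$ to third order and absorbing the resulting pieces into the good dissipations $\|\widetilde\phi\|^2$ and $\|\sqrt{\pa_xu^r}\,\widetilde u\|^2$ by interpolation and the smallness of $\chi$ is, I believe, the crux of the argument. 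Once this bookkeeping is done, the remaining nonlinear and rarefaction-source terms are controlled via Lemma \ref{cl.Re.Re2.}$(ii)$ together with Young's and Sobolev's inequalities, and summing the zeroth- and first-order estimates yields a closed differential inequality whose integration in time gives \eqref{total.eng.}; the continuation argument then upgrades the local solution to a global one.

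Finally, to obtain \eqref{asp} I would promote the time-integrated dissipation to pointwise-in-time decay. Since \eqref{total.eng.} gives $\int_0^{+\infty}\|\pa_x[\widetilde n,\widetilde u]\|^2\,dt<\infty$ and, after one further estimate, $\int_0^{+\infty}\big|\tfrac{d}{dt}\|\pa_x[\widetilde n,\widetilde u]\|^2\big|\,dt<\infty$, one concludes $\|\pa_x[\widetilde n,\widetilde u](t)\|\to0$ as $t\to+\infty$, whence $\|[\widetilde n,\widetilde u](t)\|_{L^\infty}^2\lesssim\|[\widetilde n,\widetilde u]\|\,\|\pa_x[\widetilde n,\widetilde u]\|\to0$ by interpolation, and $\|\widetilde\phi(t)\|_{L^\infty}\to0$ follows from the elliptic equation \eqref{tphy}. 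Combining this uniform decay of the perturbation with the convergence $[n^r,u^r]\to[n^R,u^R]$ from Lemma \ref{cl.Re.Re2.}$(iii)$ and the relation $\phi^r=-\ln n^r$ then yields \eqref{asp} by the triangle inequality.
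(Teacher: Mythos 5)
Your overall architecture coincides with the paper's (local existence plus continuation, a relative-entropy zero-order estimate with the expansion of $e^{-\widetilde{\phi}}$ kept to third order, a cross-term estimate for $\|\pa_x\widetilde{n}\|^2$, a differentiated momentum estimate for $\|\pa_x^2\widetilde{u}\|^2$, and the asymptotics \eqref{asp} via integrability of $\frac{d}{dt}\|\pa_x[\widetilde{n},\widetilde{u}]\|^2$, Sobolev interpolation, and Lemma \ref{cl.Re.Re2.}$(iii)$). But there is a genuine gap in your mechanism for the time-integrated dissipation of the potential, i.e.\ the bound $\int_0^{+\infty}\left(\|\pa_x\widetilde{\phi}\|^2+\|\pa_x^2\widetilde{\phi}\|^2\right)dt\leq C_0\eps_0^{2\si/(1+\si)}$ demanded by \eqref{total.eng.}. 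Testing \eqref{tphy} against $\widetilde{\phi}$ only yields the elliptic bound $\|\widetilde{\phi}\|_{H^1}^2\lesssim\|\widetilde{n}\|^2+\eps(1+t)^{-1}$, and ``reading off'' $\pa_x^2\widetilde{\phi}$ from \eqref{tphy} likewise bounds it by $\|\widetilde{n}\|+\|\widetilde{\phi}\|$; since $\|\widetilde{n}(t)\|$ and $\|\widetilde{\phi}(t)\|$ are merely uniformly bounded (only their $x$-derivatives are dissipated; the zero modes are not square-integrable in time for a rarefaction perturbation), neither step produces a time-integrable quantity. Relatedly, your cross-term estimate is silent about the electric-field contribution $-(\pa_x\widetilde{\phi},\pa_x\widetilde{n})$ that arises when \eqref{tu} is tested against $\pa_x\widetilde{n}$, and absorbing it by Cauchy--Schwarz is circular precisely because $\|\pa_x\widetilde{\phi}\|^2$ is not yet dissipated. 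The paper's Step 2 resolves both points at once: differentiate the Poisson equation in $x$ (equation \eqref{d.phi}), test it against $\pa_x\widetilde{\phi}$, and add the result to the cross-term identity \eqref{tu.mp}; the terms $\pm(\pa_x\widetilde{n},\pa_x\widetilde{\phi})$ cancel exactly, and what survives carries a sign: $(n^re^{-\widetilde{\phi}}\pa_x\widetilde{\phi},\pa_x\widetilde{\phi})+\|\pa_x^2\widetilde{\phi}\|^2$ together with the pressure dissipation $A(n^{-1},(\pa_x\widetilde{n})^2)$. This simultaneous coupling of the momentum and Poisson equations at first order --- rather than a decoupled elliptic reading of \eqref{tphy} --- is what delivers the $\widetilde{\phi}$-entries of \eqref{total.eng.}, and it is also needed afterwards to derive \eqref{latm2} in the large-time argument.

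A secondary, repairable inaccuracy: the exact antiderivative produced by the cancellation of $-(\pa_x\widetilde{\phi},n\widetilde{u})$ is $\int n^r\bigl(1-(1+\widetilde{\phi})e^{-\widetilde{\phi}}\bigr)dx$, whose Taylor expansion $\frac{1}{2}\widetilde{\phi}^2-\frac{1}{3}\widetilde{\phi}^3+\cdots$ matches the paper's energy terms $\frac{1}{2}(\widetilde{\phi}^2,n^r)-\frac{1}{3}(\widetilde{\phi}^3,n^r)$, not your $\int n^r\bigl(e^{-\widetilde{\phi}}-1+\widetilde{\phi}\bigr)dx$: the matching requires $F'(\widetilde{\phi})=\widetilde{\phi}e^{-\widetilde{\phi}}$, whereas your choice has $F'(\widetilde{\phi})=1-e^{-\widetilde{\phi}}$, leaving a cubic remainder of the form $\int n^r\,O(\widetilde{\phi}^2)\,\pa_t\widetilde{\phi}\,dx$. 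Such a term can still be controlled the way the paper controls $I_5$, by estimating $\pa_t\widetilde{\phi}$ through \eqref{ptphyL2}, so this does not break the argument, but as stated your claim that the coupling term ``is reorganized against'' your functional leaving only source-type remainders is not exact.
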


The local existence of the solution $[\widetilde{n},\widetilde{u},\widetilde{\phi}]$  of the
NSP system \eqref{trho}, \eqref{tu}, \eqref{tphy} and \eqref{tid} can be proved by the standard iteration method and its proof is omitted for brevity.
As to the proof of Theorem \ref{main.res.}, it suffices to show the following a priori estimate.

\begin{proposition}\label{a.pri.}
Assume that all the conditions in Theorem \ref{main.res.} hold. There are constants $\eps_0>0$, $0<\si<1$, and $C_0>0$ such that if the smooth solution $[\widetilde{n},\widetilde{u},\widetilde{\phi}]$ to the Cauchy problem
\eqref{trho}, \eqref{tu}, \eqref{tphy} and \eqref{tid} on $0\leq t\leq T$ for $T>0$ satisfies
\begin{equation}\label{pri.ass.}
\sup\limits_{0\leq t\leq T}\left\|\left[\widetilde{n}, \widetilde{u}, \widetilde{\phi}\right](t)\right\|^2_{H^1}+\eps\leq \eps_0^2,
\end{equation}
then it holds that
\begin{multline}\label{total.eng1.}
\sup\limits_{0\leq t\leq T}\left\|\left[\widetilde{n}, \widetilde{u}, \widetilde{\phi}\right](t)\right\|^2_{H^1}
+\int_0^{T}\left\|\sqrt{\pa_xu^r}\left[ \widetilde{u}, \pa_x\widetilde{n},\pa_x\widetilde{u}\right](t)\right\|^2dt
\\+\int_0^{T}\left\|\left[\pa_x\widetilde{n}, \pa_x\widetilde{u}, \pa_x\widetilde{\phi}, \pa^2_x\widetilde{\phi}, \pa^2_x\widetilde{u}\right](t)\right\|^2dt
\leq C_0\left(\left\|[\widetilde{n}, \widetilde{u}](0)\right\|^2_{H^1}+\eps^{\frac{\sigma}{1+\sigma}}\right).
\end{multline}
\end{proposition}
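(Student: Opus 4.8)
The global solution and the limit \eqref{asp} follow from the local existence theory together with the uniform bound \eqref{total.eng1.} by the usual continuation argument, so I would concentrate entirely on establishing \eqref{total.eng1.} under the a priori hypothesis \eqref{pri.ass.}. The plan is a weighted energy method carried out at the zeroth and first order in $x$, in which the fluid unknowns $[\widetilde n,\widetilde u]$ are estimated directly while the potential $\widetilde\phi$ is slaved to them through the elliptic Poisson equation \eqref{tphy}. The structural identity that organizes the whole computation is that, since the Poisson equation reads $n=e^{-\phi}+\pa_x^2\phi$, the coupling may be put in conservative form,
\[
A\,\pa_x n-n\,\pa_x\phi=\pa_x\Big[(A+1)\,n-\pa_x^2\phi-\tfrac12(\pa_x\phi)^2\Big];
\]
thus the effective pressure is $(A+1)n$ up to genuinely lower-order terms, which is exactly the origin of the speed $c=\sqrt{A+1}$ in \eqref{1cha} and which makes the expansion of $e^{-\phi}$ around the profile up to third order the natural tool for isolating its good quadratic part.

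At the lowest order I would work with the Lyapunov functional
\[
\mathcal{E}(t)=\int_{\R}\Big[\tfrac{n}{2}\widetilde u^{2}+\Psi(n\,|\,n^{r})+\tfrac12(\pa_x\widetilde\phi)^{2}+n^{r}\big(e^{-\widetilde\phi}-1+\widetilde\phi\big)\Big]\,dx,
\]
where $\Psi(n\,|\,n^{r})=c^{2}\big(n\ln(n/n^{r})-(n-n^{r})\big)\ge0$ is the relative entropy attached to the effective pressure and $e^{-\widetilde\phi}-1+\widetilde\phi\ge0$ is the convex potential generated by the Boltzmann relation; under \eqref{pri.ass.} these are comparable to $\tfrac12 c^2\widetilde n^2/n^r$ and $\tfrac12\widetilde\phi^2$, so $\mathcal{E}$ is equivalent to $\|[\widetilde n,\widetilde u,\widetilde\phi,\pa_x\widetilde\phi]\|^2$. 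Differentiating $\mathcal{E}$ in time and inserting \eqref{trho}, \eqref{tu} and \eqref{tphy}, the fluid part supplies the Matsumura--Nishihara compression $\la\int\pa_xu^{r}(\widetilde n^{2}+\widetilde u^{2})$, which is nonnegative because $\pa_xu^{r}>0$ by Lemma \ref{cl.Re.Re2.}$(i)$, and the viscous dissipation $\int(\pa_x\widetilde u)^{2}/n$. The subtle part is the self-consistent coupling: the conservative flux produces the cross terms $\int\pa_x^2\widetilde\phi\,\pa_x\widetilde u$ and $\tfrac12\int(\pa_x\widetilde\phi)^2\pa_x\widetilde u$, while the field part of $\mathcal{E}$, after one use of \eqref{tphy}, produces $-\int\widetilde n\,\pa_t\widetilde\phi$; these must be reconciled against one another and against the pressure work by means of the continuity equation \eqref{trho} and the Poisson relation, the term $n^{r}(e^{-\widetilde\phi}-1+\widetilde\phi)$ being included in $\mathcal{E}$ precisely so that this reconciliation leaves only a total time derivative, lower-order profile contributions, and cubic remainders such as $\|\widetilde\phi\|_{L^\infty}\|\pa_x\widetilde\phi\|^{2}$ that are swallowed by \eqref{pri.ass.}.

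Next I would pass to the first order by differentiating \eqref{trho}--\eqref{tu} once in $x$ and testing against $\pa_x\widetilde n$ and $\pa_x\widetilde u$, which yields the viscous dissipation $\int(\pa_x^2\widetilde u)^{2}$; the remaining density dissipation $\int(\pa_x\widetilde n)^{2}$ is extracted, as in the force-free case, from a Kawashima-type interaction functional of the form $\tfrac{d}{dt}\int \widetilde u\,\pa_x\widetilde n$, the pressure term now carrying the full coefficient $c^{2}=A+1$ on account of the conservative identity above. Finally the potential dissipation in \eqref{total.eng1.} is recovered a posteriori from \eqref{tphy}: testing \eqref{tphy} and its first $x$-derivative against $\widetilde\phi$ and $\pa_x\widetilde\phi$ and using that $\pa_x^2-n^{r}(1-e^{-\widetilde\phi})/\widetilde\phi$ is a good elliptic operator gives $\|\widetilde\phi\|_{H^1}\lesssim\|\widetilde n\|+\|\pa_x^2\phi^{r}\|$ and $\|\pa_x\widetilde\phi\|_{H^1}\lesssim\|\pa_x\widetilde n\|+\|\pa_x^3\phi^{r}\|$, whose right-hand sides are already controlled by $\mathcal{E}$ and by the density dissipation; integrating these in time produces $\int[\|\pa_x\widetilde\phi\|^2+\|\pa_x^2\widetilde\phi\|^2]\,dt$, and taking the supremum in time produces $\sup_t\|\widetilde\phi\|_{H^1}^2$.

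It then remains to handle the inhomogeneities that arise because $[n^{r},u^{r},\phi^{r}]$ solves the quasineutral system \eqref{Re.ME2.} rather than the full NSP system, namely the viscous residual $\pa_x^2 u^{r}/n^{r}$, the Poisson residual $\pa_x^2\phi^{r}$, and products of the perturbation with $\pa_x[n^{r},u^{r}]$. All of these are bounded through the decay rates of Lemma \ref{cl.Re.Re2.}$(ii)$; since the resulting time integrals are not absolutely convergent, I would pair each residual with the available dissipation by Cauchy--Schwarz and split $\int_0^T$ at an $\eps$-dependent time, the optimization over the splitting yielding the advertised loss $\eps^{\si/(1+\si)}$ for a suitable $\si\in(0,1)$. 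Summing the zeroth- and first-order estimates with the elliptic gains and absorbing every cubic and $\eps$-small term by \eqref{pri.ass.} then closes \eqref{total.eng1.}. I expect the main obstacle to be exactly the coupling term $-n\,\pa_x\widetilde\phi$: because $\widetilde\phi$ decays only slowly in time and the wave strength $\delta_r$ is not assumed small, it cannot be controlled by any direct bound and must instead be turned into a good term through the conservative identity and the third-order expansion of $e^{-\widetilde\phi}$, which is the true technical heart of the proof.
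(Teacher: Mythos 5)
Your overall architecture --- a zero-order entropy estimate, a Kawashima-type cross functional $\frac{d}{dt}\int_\R\widetilde u\,\pa_x\widetilde n\,dx$ for the density dissipation, a first-order estimate producing $\int\|\pa_x^2\widetilde u\|^2dt$, and interpolation of the profile residuals $\pa_x^2u^r$, $\pa_x^2\phi^r$ between the $\eps$-bounds and the $t^{-1}$-bounds of Lemma \ref{cl.Re.Re2.} to produce the loss $\eps^{\si/(1+\si)}$ (your time-splitting of $\int_0^T$ is equivalent to the paper's direct interpolation in \eqref{lrt3.5}) --- coincides with the paper's Steps 1--3. Your zero-order treatment is in fact a clean variant of the paper's: where the paper expands $e^{-\widetilde\phi}$ to third order and verifies the cancellation $I_3+I_4+I_7+I_8=-(\pa_x\widetilde\phi,n^rI_2u^r)$ by hand, your exact convex entropy $n^r(e^{-\widetilde\phi}-1+\widetilde\phi)$ achieves this automatically: with $g(s)=e^{-s}-1+s$ one has $(1-e^{-\widetilde\phi})\pa_x\widetilde\phi=\pa_x g(\widetilde\phi)$, so the two slowly decaying terms $(\pa_t n^r,g(\widetilde\phi))$ and $-(\pa_x\widetilde\phi,\,n^r(1-e^{-\widetilde\phi})u^r)$ cancel exactly after one integration by parts, which is precisely the ``hidden relation'' the paper engineers by Taylor truncation. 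One caveat you should record: the reconciliation also leaves the total derivative $-\frac{d}{dt}(\widetilde n,\widetilde\phi)$, which must be moved into your functional; by \eqref{tphy} it equals $\frac{d}{dt}\bigl[\|\pa_x\widetilde\phi\|^2+(n^r(1-e^{-\widetilde\phi}),\widetilde\phi)-(\pa_x^2\phi^r,\widetilde\phi)\bigr]$ and is a positive addition, so this is harmless but not optional.

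The genuine gap is in your first-order step, exactly where the theorem's non-small wave strength bites. You propose to close the Kawashima estimate for $\int\|\pa_x\widetilde n\|^2dt$ first and then recover the potential dissipation ``a posteriori'' from the elliptic inequalities $\|\pa_x\widetilde\phi\|_{H^1}\lesssim\|\pa_x\widetilde n\|+\cdots$. But the momentum equation \eqref{tu} tested against $\pa_x\widetilde n$ contains the cross term $-(\pa_x\widetilde n,\pa_x\widetilde\phi)$, and your conservative identity does not remove it: substituting the differentiated Poisson equation \eqref{d.phi} for $\pa_x^3\widetilde\phi$ reduces the pressure coefficient $A+1$ back to $A$ and regenerates the cross term with weight $n^re^{-\widetilde\phi}/n\approx 1$. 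If this term is handled by Cauchy--Schwarz together with your elliptic bound $\|\pa_x\widetilde\phi\|\lesssim n_-^{-1}\|\pa_x\widetilde n\|+\cdots$, the available good terms are $A(n^{-1},(\pa_x\widetilde n)^2)\gtrsim (A/n_+)\|\pa_x\widetilde n\|^2$ and $(n^re^{-\widetilde\phi}\pa_x\widetilde\phi,\pa_x\widetilde\phi)\gtrsim n_-\|\pa_x\widetilde\phi\|^2$, while the cross term carries coefficient one; the resulting quadratic form in $(\|\pa_x\widetilde n\|,\|\pa_x\widetilde\phi\|)$ is positive definite only under a condition of the type $n_+\lesssim A\,n_-$, which fails for strong rarefaction waves. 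The paper avoids this entirely by testing \eqref{d.phi} against $\pa_x\widetilde\phi$ and \emph{adding} the resulting identity \eqref{d.phi.mp} to the momentum identity \eqref{tu.mp} with matching weight, so that the two cross terms $\pm(\pa_x\widetilde n,\pa_x\widetilde\phi)$ cancel identically by sign, leaving all three dissipations $(n^re^{-\widetilde\phi}\pa_x\widetilde\phi,\pa_x\widetilde\phi)$, $\|\pa_x^2\widetilde\phi\|^2$ and $A(n^{-1},(\pa_x\widetilde n)^2)$ intact with amplitude-independent constants. You must therefore run the Poisson estimate simultaneously with, not after, the Kawashima step; as written, your decoupled scheme proves the proposition only for waves of small strength.
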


We remark that the strength of the rarefaction wave $|n_+-n_-|+|u_+-u_-|$ is not necessarily small, and the similar result could hold for the non-isothermal case, for instance, for a general pressure function depending on the density. The proof of Proposition \ref{a.pri.} is based on the techniques developed in \cite{MN92} for the classical Navier-Stokes equations. We here have to additionally take into account the effect of the self-consistent force on the system. The most technical part in the proof occurs to the estimate on the inner product $(-\pa_x\widetilde{\phi},n \widetilde{u})$ for which one has to make use of the Poisson equation by expending $e^{-\widetilde{\phi}}$ up to the third-order {and obtain cancelations by seeking the hidden relation among the lower order terms}.

\subsection{A priori estimate}

\begin{proof}[Proof of Proposition \ref{a.pri.}]
We divide it by three steps as follows.

\medskip
\noindent{\bf Step 1.} {\it Zero-order energy estimates.}

Set
\begin{equation}
\label{def.psi}
\psi(n,n^r)=A\int_{n^r}^n\frac{s-n^r}{s^2}ds.
\end{equation}
Note that $\psi(n,n^r)$ is equivalent to $|n-n^r|^2$ for $|n-n^r|\leq C$.

Multiplying  the first equation of \eqref{NSP} and \eqref{tu} by $\psi(n,n^r)$ and $n\widetilde{u}$ respectively,
 integrating the resulting equations with  respect to $x$ over $\R$ and taking the summation, one obtains
\begin{equation}\label{ztu.ip}
\frac{1}{2}\frac{d}{dt} (\widetilde{u}, n\widetilde{u})+\frac{d}{dt} (n, \psi)\underbrace{-\frac{1}{2}(\widetilde{u}^2, n_t)+(u\pa_x u-u^r\pa_x u^r,n\widetilde{u})}_{I_1}
-(\pa_x\widetilde{\phi}, n\widetilde{u})=(\pa_x^2\widetilde{u},\widetilde{u})+(\pa_x^2u^r,\widetilde{u}),
\end{equation}
where we have used the identity
$$
-(n,\pa_t\psi)+(\pa_x(n u),\psi)+A\left(\frac{\pa_xn}{n}-\frac{\pa_xn^r}{n^r},n\widetilde{u}\right)=0,
$$
which can be justified by replacing $\psi(n,n^r)$ by \eqref{def.psi} and using equations of $\pa_tn$ and $\pa_t n^r$ as well as integration by part.

The left-hand terms of  \eqref{ztu.ip} are estimated as follows.  For $I_1$, from the first equation of \eqref{NSP}, it holds that
\begin{equation}\label{ztu.ip1}
\begin{split}
I_1=\frac{1}{2}(\widetilde{u}^2,\pa_x(n\widetilde{u}))+\frac{1}{2}(\widetilde{u}^2,\pa_x(n u^r))
+(\widetilde{u}\pa_x\widetilde{u}+u^r\pa_x\widetilde{u}+\widetilde{u}\pa_xu^r,n\widetilde{u})
=(\widetilde{u}^2,n\pa_xu^r),
\end{split}
\end{equation}
which is a good term due to $n>0$ and $\pa_x u^r>0$.
We now turn to estimate the fourth term on the left hand side of \eqref{ztu.ip}.
In light of \eqref{trho} and \eqref{tphy} and by integration by parts, we have
\begin{equation}\label{ztu.ip2}
\begin{split}
-(\pa_x\widetilde{\phi}, n\widetilde{u})=\left(\widetilde{\phi}, -\pa_t\pa_x^2\widetilde{\phi}+\pa_t\left[n^r\left(1-e^{-\widetilde{\phi}}\right)\right]-\pa_t\pa_x^2\phi^r-\pa_x(\widetilde{n}u^r)\right).
\end{split}
\end{equation}
Next we compute the right hand side of \eqref{ztu.ip2} term by term. It is obvious to see that
\begin{equation}\label{phy.eg1.}
(\widetilde{\phi}, -\pa_t\pa_x^2\widetilde{\phi})=\frac{1}{2}\frac{d}{dt}(\pa_x\widetilde{\phi}, \pa_x\widetilde{\phi}).
\end{equation}
For the second term on the right hand side of \eqref{ztu.ip2}, we first get from the Taylor's formula with an integral remainder
that
\begin{equation}\label{taylor}
1-e^{-\widetilde{\phi}}=\widetilde{\phi}-\frac{1}{2}\widetilde{\phi}^2+
\underbrace{\int_0^{-\widetilde{\phi}}\frac{(\widetilde{\phi}+\varrho)^2}{2}e^{-\varrho} d\varrho}_{I_2}.
\end{equation}
Then it follows that
\begin{equation*}
\begin{split}
\left(\widetilde{\phi}, \pa_t\left[n^r\left(1-e^{-\widetilde{\phi}}\right)\right]\right)
=&\left(\widetilde{\phi}, \pa_t(n^r\widetilde{\phi})\right)
-\frac{1}{2}\left(\widetilde{\phi}, \pa_t(n^r\widetilde{\phi}^2)\right)
+\left(\widetilde{\phi}, \pa_t(n^rI_2)\right).
\end{split}
\end{equation*}
Since $[n^r, u^r]$ satisfies \eqref{Re.ME2.}, one has
\begin{equation}\label{phi.eg.zo.}
\left(\widetilde{\phi}, \pa_t(n^r\widetilde{\phi})\right)=\frac{1}{2}\frac{d}{dt}\left(\widetilde{\phi}^2, n^r\right)+\frac{1}{2}\left(\widetilde{\phi}^2, \pa_tn^r\right)
=\frac{1}{2}\frac{d}{dt}\left(\widetilde{\phi}^2, n^r\right)\underbrace{-\frac{1}{2}\left(\widetilde{\phi}^2, \pa_x(n^ru^r)\right)}_{I_3},
\end{equation}
\begin{equation*}
-\frac{1}{2}\left(\widetilde{\phi}, \pa_t(n^r\widetilde{\phi}^2)\right)=-\frac{1}{3}\frac{d}{dt}\left(\widetilde{\phi}^3, n^r\right)-\frac{1}{6}\left(\widetilde{\phi}^3, \pa_tn^r\right)
=-\frac{1}{3}\frac{d}{dt}\left(\widetilde{\phi}^3, n^r\right)+\underbrace{\frac{1}{6}\left(\widetilde{\phi}^3, \pa_x(n^ru^r)\right)}_{I_4},
\end{equation*}
and
$$
\left(\widetilde{\phi}, \pa_t(n^rI_2)\right)=
\underbrace{-\left(\widetilde{\phi}, \pa_x(n^ru^r)I_2\right)+\left(\widetilde{\phi}, n^r\pa_tI_2\right)}_{I_5}.
$$
We remark that $I_3$ and $I_4$ can not be directly controlled for the time being and they will be treated by cancelation with other terms later on.
Since $\|\widetilde{\phi}(t,x)\|_{L^{\infty}}\leq C$, it follows that
\begin{equation}\label{I3.com.}
I_2\thicksim \widetilde{\phi}^3,\ \ \pa_t I_2=\pa_t\widetilde{\phi}\int_0^{-\widetilde{\phi}}(\widetilde{\phi}+\varrho)e^{-\varrho} d\varrho
\thicksim\pa_t\widetilde{\phi}\,\widetilde{\phi}^2.
\end{equation}
In addition, we get from \eqref{trho} and \eqref{tphy} that
\begin{multline}\label{ptphyL2}
\|\pa_t\widetilde{\phi}\|^2+\|\pa_t\pa_x\widetilde{\phi}\|^2\leq
C\left\{\|\pa_t\widetilde{n}\|^2+\|\widetilde{\phi}\pa_tn^r\|^2+\|\pa_t\pa_x^2\phi^r\|^2\right\}\\
\leq C\left\{\|\pa_x(\widetilde{n} \widetilde{u})\|^2+\|\pa_x(\widetilde{n}u^r)\|^2+\|\pa_x(n^r\widetilde{u})\|^2\right\}+C\left\{\|\widetilde{\phi}\pa_tn^r\|^2+\|\pa_t\pa_x^2\phi^r\|^2\right\}.
\end{multline}
Substituting  \eqref{I3.com.} and \eqref{ptphyL2} into $I_5$, applying Lemma \ref{cl.Re.Re2.} and H\"older's inequality as well as the Cauchy-Schwarz  inequality with $0<\eta<1$, one has
\begin{equation}\label{I5}
\begin{split}
{|I_5|}\lesssim& C_\eta\left\|\widetilde{\phi}^3\right\|^2+\left\|\widetilde{\phi}\pa_x[n^r,u^r]\right\|^2+\eta\|\pa_t\widetilde{\phi}\|^2
\\
\lesssim& \eps_0C_\eta\|\pa_x\widetilde{\phi}\|^2+\eta\left\|\pa_x[\widetilde{n},\widetilde{u}]\right\|^2
+\|\pa_x[n^r,u^r]\|^2_{L^\infty}\|[\widetilde{n},\widetilde{u},\widetilde{\phi}]\|^2
+\left\|\pa^{2}_x\left(\frac{\pa_x(n^ru^r)}{n^r}\right)\right\|^2\\
\lesssim& \max\{\eps_0C_\eta,\eta\}\left\|\pa_x[\widetilde{n},\widetilde{u},\widetilde{\phi}]\right\|^2
+(1+t)^{-2}\|[\widetilde{n},\widetilde{u},\widetilde{\phi}]\|^2
+\eps(1+t)^{-2},
\end{split}
\end{equation}
where the following Sobolev's inequality
\begin{equation}\label{sob.ine.}
\|f\|_{L^\infty}\leq\sqrt{2}\|f\|^{1/2}\|\pa_xf\|^{1/2} \ \  \textrm{for any}\, \ f\in H^1,
\end{equation}
has been used to obtain the bound
$$
\left\|\widetilde{\phi}^3\right\|^2\leq C\|\pa_x\widetilde{\phi}\|^2\|\widetilde{\phi}\|^4.
$$
By integration by parts and employing Lemma \ref{cl.Re.Re2.} and the Cauchy-Schwarz  inequality with $0<\eta<1$, we see that
the third term on the right hand side of \eqref{ztu.ip2}
can be dominated by
\begin{equation}\label{phi.diss1.}
\left(\widetilde{\phi}, -\pa_t\pa_x^2\phi^r\right)\leq\eta\|\pa_x\widetilde{\phi}\|^2+C_\eta\|\pa_t\pa_x\phi^r\|^2
\leq\eta\|\pa_x\widetilde{\phi}\|^2+\eps C_\eta(1+t)^{-2}.
\end{equation}
As to the last term on the right hand side of \eqref{ztu.ip2}, we have from \eqref{tphy} that
\begin{equation}\label{lt3.7}
\begin{split}
\left(\widetilde{\phi}, -\pa_x(\widetilde{n}u^r)\right)
&=-\left(\widetilde{\phi}, \pa_x\widetilde{n}u^r\right)-\left(\widetilde{\phi}, \widetilde{n}\pa_xu^r\right)\\
&=\left(\widetilde{\phi}, \left(-\pa_x^3\widetilde{\phi}
+\pa_x\left[n^r\left(1-e^{-\widetilde{\phi}}\right)\right]-\pa_x^3\phi^r\right)u^r\right)\\
&\quad +\left(\widetilde{\phi}, \left(-\pa_x^2\widetilde{\phi}
+\left[n^r\left(1-e^{-\widetilde{\phi}}\right)\right]-\pa_x^2\phi^r\right)\pa_xu^r\right)\\
&=\underbrace{-\frac{1}{2}\left(\pa_xu^r, \left(\pa_x\widetilde{\phi}\right)^2\right)+\left(\pa_x\widetilde{\phi},\pa_x^2\phi^ru^r\right)}_{I_6}
+\underbrace{\left(\widetilde{\phi},
\pa_x\left[n^r\left(1-e^{-\widetilde{\phi}}\right)\right]u^r\right)}_{I_7}\\
&\quad +\underbrace{\left(\widetilde{\phi}, \left[n^r\left(1-e^{-\widetilde{\phi}}\right)\right]\pa_xu^r\right)}_{I_8},
\end{split}
\end{equation}
where the last identity holds true due to the following identities:
$$
\left(\widetilde{\phi}, -\pa_x^3\widetilde{\phi}
u^r\right)+\left(\widetilde{\phi}, -\pa_x^2\widetilde{\phi}
\pa_x u^r\right)=-\frac{1}{2}\left(\pa_xu^r, \left(\pa_x\widetilde{\phi}\right)^2\right)
$$
and
$$
\left(\widetilde{\phi}, -\pa_x^3\phi^ru^r\right)+\left(\widetilde{\phi}, -\pa_x^2\phi^r\pa_xu^r\right)=\left(\pa_x\widetilde{\phi},\pa_x^2\phi^ru^r\right).
$$
It is straightforward to see that $|I_6|$ is dominated by
$$
\eta\|\pa_x\widetilde{\phi}\|^2+C_\eta(1+t)^{-2}\|\pa_x\widetilde{\phi}\|^2+C_\eta\eps(1+t)^{-2},
$$
according to {the} Cauchy-Schwarz  inequality with $0<\eta<1$ and Lemma \ref{cl.Re.Re2.}.

We now use \eqref{taylor} to expand $I_7$ and $I_8$ respectively as 
\begin{equation*}
\begin{split}
I_7
=&\left(\widetilde{\phi}, \pa_x(n^r\widetilde{\phi})u^r\right)
-\frac{1}{2}\left(\widetilde{\phi}, \pa_x(n^r\widetilde{\phi}^2)u^r\right)
+\left(\widetilde{\phi}, \pa_x(n^rI_2)u^r\right)\\
=&\left(\widetilde{\phi}^2, \pa_xn^ru^r\right)-\frac{1}{2}\left(\widetilde{\phi}^2, \pa_x(n^ru^r)\right)
-\frac{1}{2}\left(\widetilde{\phi}^3, \pa_xn^ru^r\right)+\frac{1}{3}\left(\widetilde{\phi}^3, \pa_x(n^ru^r)\right)
+\left(\widetilde{\phi}, \pa_x(n^rI_2)u^r\right),
\end{split}
\end{equation*}
and
\begin{equation*}
\begin{split}
I_8=\left(\widetilde{\phi}, n^r\widetilde{\phi}\pa_xu^r\right)
-\frac{1}{2}\left(\widetilde{\phi}, n^r\widetilde{\phi}^2\pa_xu^r\right)
+\left(\widetilde{\phi}, n^rI_2\pa_xu^r\right).
\end{split}
\end{equation*}
Owing to these, we see
\begin{equation}\label{I4589}
I_3+I_4+I_7+I_8=\left(\widetilde{\phi}, \pa_x(n^rI_2)u^r\right)
+\left(\widetilde{\phi}, n^rI_2\pa_xu^r\right)
=-\left(\pa_x\widetilde{\phi}, n^rI_2u^r\right)
\leq C \eps_0^2\|\pa_x\widetilde{\phi}\|^2.
\end{equation}
Recalling \eqref{ztu.ip2}, we thereby complete the estimate on the term $-(\pa_x\widetilde{\phi}, n\widetilde{u})$ in the way that
\begin{multline*}
\left|[-(\pa_x\widetilde{\phi}, n\widetilde{u})]-\left[\frac{1}{2}\frac{d}{dt}(\pa_x\widetilde{\phi},\pa_x \widetilde{\phi})+\frac{1}{2}\frac{d}{dt}(\widetilde{\phi}^2,n^r)-\frac{1}{3}\frac{d}{dt}(\widetilde{\phi}^3,n^r)\right]\right|\\
\lesssim (\eta+\eps_0C_\eta +\eps_0^2) {\left\|\pa_x [\widetilde{n}, \widetilde{u},\widetilde{\phi}]\right\|^2}
+C_\eta (1+t)^{-2} {\left\| [\widetilde{n}, \widetilde{u},\widetilde{\phi}]\right\|^2}+\eps C_\eta (1+t)^{-2}.
\end{multline*}

It now remains to estimate the second term on the right hand side of \eqref{ztu.ip}.
Letting $0<\sigma<1$, by applying H\"older's inequality, Young's inequality, Lemma \ref{cl.Re.Re2.} and the Sobolev  inequality \eqref{sob.ine.}, one obtains
\begin{equation}\label{lrt3.5}
\begin{split}
{|(\pa_x^2u^r,\widetilde{u})|}\lesssim& \|\pa_x^2u^r\|_{L^{1}}\|\widetilde{u}\|_{L^{\infty}}
\lesssim \eps^{\frac{\sigma}{1+\sigma}}(1+t)^{-1}\|\widetilde{u}\|^{\frac{1-\sigma}{1+\sigma}}_{L^{\infty}}
\|\widetilde{u}\|^{\frac{2\sigma}{1+\sigma}}\\
\lesssim&\eps^{\frac{\sigma}{1+\sigma}}(1+t)^{-1}\|\pa_x\widetilde{u}\|^{\frac{1-\sigma}{2(1+\sigma)}}
\|\widetilde{u}\|^{\frac{1+3\sigma}{2(1+\sigma)}}\\
\lesssim&\eps^{\frac{\sigma}{1+\sigma}}\left\{\|\pa_x\widetilde{u}\|^2+(1+t)^{-1-\frac{1-\sigma}{3+5\sigma}}
\|\widetilde{u}\|^{\frac{2(1+3\sigma)}{3+5\sigma}}\right\}\\
\lesssim&\eps^{\frac{\sigma}{1+\sigma}}\left\{\|\pa_x\widetilde{u}\|^2+(1+t)^{-\frac{5+3\sigma}{4+4\sigma}}
+(1+t)^{-\frac{3+5\sigma}{2(1+3\sigma)}}\|\widetilde{u}\|^{2}\right\}.
\end{split}
\end{equation}
Finally by putting \eqref{ztu.ip}, \eqref{ztu.ip1}, \eqref{phy.eg1.}, \eqref{phi.eg.zo.}, \eqref{I5}, \eqref{phi.diss1.}, \eqref{lt3.7}, \eqref{I4589}
and \eqref{lrt3.5} together and letting $\eps>0, \eps_0>0$ and $0<\eta<1$ be suitably small, we have
\begin{equation}\label{ztu.sum1}
\begin{split}
\frac{d}{dt} &(\widetilde{u}, n\widetilde{u})+2\frac{d}{dt} (n, \psi)+\frac{d}{dt}\left(\widetilde{\phi}^2, n^r\right)+\frac{d}{dt}(\pa_x\widetilde{\phi}, \pa_x\widetilde{\phi})\\&-\frac{2}{3}\frac{d}{dt}\left(\widetilde{\phi}^3, n^r\right)
+2(\widetilde{u}^2,n\pa_xu^r)+\la\|\pa_x\widetilde{u}\|^2\\
\lesssim & (\eps_0 C_\eta+\eta)\left\|\pa_x[\widetilde{n},\widetilde{\phi}]\right\|^2
+ C_\eta(1+t)^{-2}\|[\widetilde{n},\widetilde{u},\widetilde{\phi},\pa_x\widetilde{\phi}]\|^2\\
&+\eps^{\frac{\sigma}{1+\sigma}}(1+t)^{-\frac{3+5\sigma}{2(1+3\sigma)}}\|\widetilde{u}\|^{2}
+\eps^{\frac{\sigma}{1+\sigma}}(1+t)^{-\frac{5+3\sigma}{4+4\sigma}}
+\eps C_\eta(1+t)^{-2}.
\end{split}
\end{equation}

\medskip
\noindent{\bf Step 2.} {\it The dissipation of $\pa_x\widetilde{n}$ and $\pa_x\widetilde{\phi}$.}

We first differentiate \eqref{tphy} and \eqref{trho} in $x$ to obtain
\begin{equation}\label{d.phi}
\pa_x^3\widetilde{\phi}=\pa_x\widetilde{n}+\pa_xn^r\left(1-e^{-\widetilde{\phi}}\right)
+n^re^{-\widetilde{\phi}}\pa_x\widetilde{\phi}-\pa_x^3\phi^r,
\end{equation}
and
\begin{equation}\label{d.rho}
\pa_t\pa_x\widetilde{n}+\pa_x u\pa_x\widetilde{n}+u\pa^2_x\widetilde{n}+\pa_xn\pa_x\widetilde{u}
+n\pa^2_x\widetilde{u}+\widetilde{n}\pa^2_xu^r
+\pa_x\widetilde{n}\pa_xu^r+\widetilde{u}\pa^2_xn^r
+\pa_xn^r\pa_x\widetilde{u}=0.
\end{equation}
Then taking the inner products of \eqref{d.phi}, \eqref{d.rho} and \eqref{tu} with $\pa_x\widetilde{\phi}$, $\frac{\pa_x\widetilde{n}}{n^2}$
and $\pa_x\widetilde{n}$ with  respect to $x$ over $\R$, respectively, one has
\begin{equation}\label{d.phi.mp}
-(\pa_x^3\widetilde{\phi},\pa_x\widetilde{\phi})+(\pa_x\widetilde{n},\pa_x\widetilde{\phi})
+\left(\pa_xn^r\left(1-e^{-\widetilde{\phi}}\right),\pa_x\widetilde{\phi}\right)
+(n^re^{-\widetilde{\phi}}\pa_x\widetilde{\phi},\pa_x\widetilde{\phi})-(\pa_x^3\phi^r,\pa_x\widetilde{\phi})=0,
\end{equation}
\begin{multline}\label{d.rho.mp}
\left(\pa_t\pa_x\widetilde{n}, \frac{\pa_x\widetilde{n}}{n^2}\right)+\left(\pa^2_x\widetilde{u},\frac{\pa_x\widetilde{n}}{n}\right)\\
+\left(\pa_x u\pa_x\widetilde{n}+u\pa^2_x\widetilde{n}+\pa_xn\pa_x\widetilde{u}
+\widetilde{n}\pa^2_xu^r
+\pa_x\widetilde{n}\pa_xu^r+\widetilde{u}\pa^2_xn^r
+\pa_xn^r\pa_x\widetilde{u}, \frac{\pa_x\widetilde{n}}{n^2}\right)=0,
\end{multline}
and
\begin{multline}\label{tu.mp}
(\pa_t \widetilde{u},\pa_x\widetilde{n})+(u\pa_x u-u^r\pa_x u^r, \pa_x\widetilde{n})+A\left(\frac{\pa_xn}{n}-\frac{\pa_xn^r}{n^r}, \pa_x\widetilde{n}\right)\\
-(\pa_x\widetilde{n}, \pa_x\widetilde{\phi})-\left(\pa_x^2\widetilde{u},\frac{\pa_x\widetilde{n}}{n}\right)
-\left(\pa_x^2u^r,\frac{\pa_x\widetilde{n}}{n}\right)=0.
\end{multline}
The summation of \eqref{d.phi.mp}, \eqref{d.rho.mp} and \eqref {tu.mp} implies
\begin{equation}\label{ztrho.sum1}
\begin{split}
\frac{d}{dt} &(\widetilde{u}, \pa_x\widetilde{n})+\frac{1}{2}\frac{d}{dt} ((\pa_x\widetilde{n})^2, n^{-2})+\left(\pa_xu^r, (\pa_x\widetilde{n})^2n^{-2}\right)+A\left(n^{-1}, (\pa_x\widetilde{n})^2\right)
\\&+\left(n^re^{-\widetilde{\phi}}\pa_x\widetilde{\phi},\pa_x\widetilde{\phi}\right)+(\pa^2_x\widetilde{\phi},\pa^2_x\widetilde{\phi})\\
= & -\left(\pa_xn^r\left(1-e^{-\widetilde{\phi}}\right),\pa_x\widetilde{\phi}\right)+(\pa_x^3\phi^r,\pa_x\widetilde{\phi})
-\left((\pa_x\widetilde{n})^2, n^{-3}\pa_tn\right)\\
&-\left(\pa_x u\pa_x\widetilde{n}+u\pa^2_x\widetilde{n}+\pa_xn\pa_x\widetilde{u}
+\widetilde{n}\pa^2_xu^r+\widetilde{u}\pa^2_xn^r
+\pa_xn^r\pa_x\widetilde{u}, \frac{\pa_x\widetilde{n}}{n^2}\right)\\
&+(\widetilde{u}, \pa_t \pa_x\widetilde{n})-(u\pa_x u-u^r\pa_x u^r, \pa_x\widetilde{n})
+A\left((nn^r)^{-1}\widetilde{n}\pa_xn^r, \pa_x\widetilde{n}\right)+\left(\pa_x^2u^r,\frac{\pa_x\widetilde{n}}{n}\right),
\end{split}
\end{equation}
which is further equal to
\begin{equation*}
\begin{split}
&\underbrace{ -\left(\pa_xn^r\left(1-e^{-\widetilde{\phi}}\right),\pa_x\widetilde{\phi}\right)}_{I_9}
+\underbrace{(\pa_x^3\phi^r,\pa_x\widetilde{\phi})}_{I_{10}}
+\underbrace{\frac{1}{2}\left(\pa_x u\pa_x\widetilde{n},\frac{\pa_x\widetilde{n}}{n^2}\right)}_{I_{11}}\\
&-\underbrace{\left(\pa_xn\pa_x\widetilde{u}
+\widetilde{n}\pa^2_xu^r+\widetilde{u}\pa^2_xn^r
+\pa_xn^r\pa_x\widetilde{u}, \frac{\pa_x\widetilde{n}}{n^2}\right)}_{I_{12}}\\
&+\underbrace{(\widetilde{u}, \pa_t \pa_x\widetilde{n})-(u\pa_x u-u^r\pa_x u^r, \pa_x\widetilde{n})}_{I_{13}}
+\underbrace{A\left((nn^r)^{-1}\widetilde{n}\pa_xn^r, \pa_x\widetilde{n}\right)}_{I_{14}}+\underbrace{\left(\pa_x^2u^r,\frac{\pa_x\widetilde{n}}{n}\right)}_{I_{15}},
\end{split}
\end{equation*}
where the following fact
$$
-\left((\pa_x\widetilde{n})^2, n^{-3}\pa_tn\right)
-\left(\pa_x u\pa_x\widetilde{n}+u\pa^2_x\widetilde{n}, \frac{\pa_x\widetilde{n}}{n^2}\right)=
\frac{1}{2}\left(\pa_x u\pa_x\widetilde{n},\frac{\pa_x\widetilde{n}}{n^2}\right)
$$
has been used to deduce $I_{11}$.

We now estimate terms $I_{j}$ $(9\leq j\leq15)$ as follows. For $I_9$, $I_{10}$, $I_{14}$ and $I_{15}$, it is direct to obtain
$$
{|I_{9}|}\leq \eta\|\pa_x\widetilde{\phi}\|^2+C_\eta\|\pa_xn^r\|^2_{L^\infty}\|\widetilde{\phi}\|^2
\leq \eta\|\pa_x\widetilde{\phi}\|^2+C_\eta(1+t)^{-2}\|\widetilde{\phi}\|^2,
$$
$$
{|I_{10}|}\leq \eta\|\pa_x\widetilde{\phi}\|^2+C_\eta\|\pa_x^3\phi^r\|^2
\leq \eta\|\pa_x\widetilde{\phi}\|^2+C_\eta\eps^3 (1+t)^{-2},
$$
$$
{|I_{14}|}\leq \eta\|\pa_x\widetilde{n}\|^2+C_\eta\|\pa_xn^r\|^2_{L^\infty}\|\widetilde{n}\|^2
\leq \eta\|\pa_x\widetilde{n}\|^2+C_\eta(1+t)^{-2}\|\widetilde{n}\|^2,
$$
$$
{|I_{15}|}\leq \eta\|\pa_x\widetilde{n}\|^2+C_\eta\eps(1+t)^{-2}.
$$
{For $I_{11}$ one has
\begin{equation*}
\begin{split}
I_{11}=& \frac{1}{2} \left(\pa_x u^r\pa_x\widetilde{n},\frac{\pa_x \widetilde{n}}{n^2}\right)
+ \frac{1}{2} \left(\pa_x \widetilde{u}\pa_x\widetilde{n},\frac{\pa_x \widetilde{n}}{n^2}\right),
\end{split}
\end{equation*}
and
\begin{equation*}
\begin{split}
 \left|\left(\pa_x \widetilde{u}\pa_x\widetilde{n},\frac{\pa_x \widetilde{n}}{n^2}\right)\right|
\leq&\eta \|\pa_x \widetilde{n}\|^2 +C_\eta \|\pa_x \widetilde{u} \pa_x \widetilde{n}\|^2
\leq\eta \|\pa_x \widetilde{n}\|^2 +C_\eta \|\pa_x \widetilde{n}\|^2 (\|\pa_x \widetilde{u}\|^2+\|\pa_x^2 \widetilde{u}\|^2)
\\ \leq&(\eta +C_\eta \eps_0^2)\|\pa_x\widetilde{n}\|^2 +C_\eta \eps_0^2\|\pa_x^2 \widetilde{u}\|^2.
\end{split}
\end{equation*}
As to $I_{12}$, it follows that}
\begin{equation*}
\begin{split}
{|I_{12}|}\leq& \eta\|\pa_x\widetilde{n}\|^2+C_\eta\|\pa_x\widetilde{n}\|^2(\|\pa_x\widetilde{u}\|^2+\|\pa^2_x\widetilde{u}\|^2)
+C_\eta\|\pa_xn^r\|_{L^\infty}^2 \|\pa_x\widetilde{u}\|^2\\
&+C_\eta (\|\widetilde{n}\|_{L^\infty}^2+\|\widetilde{u}\|_{L^\infty}^2) (\|\pa_x^2 u^r\|^2+\|\pa_x^2n^r\|^2)\\
\leq&(\eta+C_\eta \eps_0^2)\|[\pa_x\widetilde{n},\pa_x\widetilde{u}]\|^2+C_\eta \eps_0^2 \|\pa_x^2\widetilde{u} \|^2+C_\eta\eps_0^2\eps(1+t)^{-2}.
\end{split}
\end{equation*}
To estimate $I_{13}$, we first notice that
\begin{eqnarray*}
I_{13} &=&(\pa_x \widetilde{u},-\pa_t \widetilde{n})-\big((\widetilde{u}+u^r)\pa_x (\widetilde{u}+u^r) -u^r\pa_x u^r,\pa_x \widetilde{n}\big)\\
&=&\big(\pa_x \widetilde{u}, \pa_x (n u)-\pa_x (n^r u^r)\big)- (\widetilde{u} \pa_x \widetilde{u} +\widetilde{u} \pa_x u^r
+{u^r \pa_x\widetilde{u}},\pa_x \widetilde{n})\\
&=&((\pa_x \widetilde{u})^2, \widetilde{n})+(\pa_x \widetilde{u},\widetilde{n}\pa_x u^r) +(\pa_x \widetilde{u},\pa_x n^r \widetilde{u}) +(\pa_x \widetilde{u}, n^r \pa_x \widetilde{u})-(\widetilde{u}\pa_x u^r,\pa_x \widetilde{n}),
\end{eqnarray*}
where the right-hand terms can be estimated as
\begin{multline*}
{|((\pa_x \widetilde{u})^2, \widetilde{n})|+|(\pa_x \widetilde{u},\pa_x n^r \widetilde{u})| +|(\pa_x \widetilde{u}, n^r \pa_x \widetilde{u})|}\\
\leq C \eps_0 \|\pa_x \widetilde{u}\|^2 +\left[\eta \|\pa_x \widetilde{u}\|^2 + C_\eta (1+t)^{-2}\|\widetilde{u}\|^2\right] +n_+\|\pa_x \widetilde{u}\|^2,
\end{multline*}
and
\begin{eqnarray*}
|(\pa_x \widetilde{u},\widetilde{n}\pa_x u^r)|+|(\widetilde{u}\pa_x u^r,\pa_x \widetilde{n})|
&\leq& \eta \|\pa_x[\widetilde{n},\widetilde{u}]\|^2 + C_\eta  \|[\widetilde{n},\widetilde{u}]\pa_x u^r\|^2\\
&\leq&  \eta \|\pa_x[\widetilde{n},\widetilde{u}]\|^2 +C_\eta (1+t)^{-2} \|[\widetilde{n},\widetilde{u}]\|^2.
\end{eqnarray*}
Therefore it follows that
\begin{equation*}
{|I_{13}|\lesssim  (\eta+\eps_0 )\|\pa_x[\widetilde{n},\widetilde{u}]\|^2+n_+\|\pa_x \widetilde{u}\|^2
+C_\eta (1+t)^{-2} \|[\widetilde{n},\widetilde{u}]\|^2.}
\end{equation*}
%
%

By plugging the above estimates back to \eqref{ztrho.sum1}  and letting $0<\eta<1$ and $\eps_0>0$ be suitably small, we obtain
\begin{eqnarray}
&&\frac{d}{dt} (\widetilde{u}, \pa_x\widetilde{n})+\frac{1}{2}\frac{d}{dt} ((\pa_x\widetilde{n})^2, n^{-2})+\frac{1}{2}\left(\pa_xu^r, (\pa_x\widetilde{n})^2n^{-2}\right)\notag\\
&&\quad +\la\left[\left(n^{-1}, (\pa_x\widetilde{n})^2\right)+\left(n^re^{-\widetilde{\phi}}\pa_x\widetilde{\phi},\pa_x\widetilde{\phi}\right)\right]+(\pa^2_x\widetilde{\phi},\pa^2_x\widetilde{\phi})\notag\\
&&\leq \max\{1,n_+\}\|\pa_x\widetilde{u}\|^2 +C\eps_0^2\|\pa_x^2\widetilde{u}\|^2
+ C(1+t)^{-2} {\left\|[\widetilde{n},\widetilde{u},\widetilde{\phi}]\right\|^2}+C\eps (1+t)^{-2}.\label{s2.f}
\end{eqnarray}
Further taking the summation of $2\times$\eqref{s2.f} and $C_1\times$\eqref{ztu.sum1} for a suitably large constant $C_1>1$ and also letting $\eps_0>0$ and $0<\eta<1$ in \eqref{ztrho.sum1} be suitably small gives
\begin{equation}\label{ztru.sum1}
\begin{split}
\frac{d}{dt} &2(\widetilde{u}, \pa_x\widetilde{n})+\frac{d}{dt} ((\pa_x\widetilde{n})^2, n^{-2})
+C_1\left\{\frac{d}{dt} (\widetilde{u}, n\widetilde{u})+2\frac{d}{dt} (n, \psi)\right\}\\
&+C_1\left\{\frac{d}{dt}\left(\widetilde{\phi}^2, n^r\right)+\frac{d}{dt}(\pa_x\widetilde{\phi}, \pa_x\widetilde{\phi})-\frac{2}{3}\frac{d}{dt}\left(\widetilde{\phi}^3, n^r\right)\right\}\\
&
+2C_1(\pa_xu^r,n\widetilde{u}^2)+\left(\pa_xu^r, (\pa_x\widetilde{n})^2n^{-2}\right)
+(\pa^2_x\widetilde{\phi},\pa^2_x\widetilde{\phi})
\\&+\la\left\{(\pa_x\widetilde{u},\pa_x\widetilde{u})+\left(n^{-1}, (\pa_x\widetilde{n})^2\right)+\left(n^re^{-\widetilde{\phi}}\pa_x\widetilde{\phi},\pa_x\widetilde{\phi}\right)\right\}
\\
\lesssim & \eps_0\|\pa_x^2\widetilde{u}\|^{2}+(1+t)^{-2}\left\|[\widetilde{n},\widetilde{u},\widetilde{\phi},\pa_x\widetilde{\phi}]\right\|^2
+(1+t)^{-\frac{3+5\sigma}{2(1+3\sigma)}}\|\widetilde{u}\|^{2}+\eps^{\frac{\sigma}{1+\sigma}}(1+t)^{-\frac{5+3\sigma}{4+4\sigma}}
+\eps(1+t)^{-2},
\end{split}
\end{equation}
where the large constant $C_1>1$ is chosen so as to also guarantee
$$
2(\widetilde{u}, \pa_x\widetilde{n})+((\pa_x\widetilde{n})^2, n^{-2})
+C_1(\widetilde{u}, n\widetilde{u})\thicksim \|\pa_x\widetilde{n}\|^2+\|\widetilde{u}\|^2.
$$
From Gronwall's inequality, it follows further from \eqref{ztru.sum1} that
\begin{equation}\label{ztru.sum2}
\begin{split}
\sup\limits_{0\leq t\leq T} &\Big\{2(\widetilde{u}, \pa_x\widetilde{n})+ ((\pa_x\widetilde{n})^2, n^{-2})
+C_1(\widetilde{u}, n\widetilde{u})+2C_1(n, \psi)
+C_1\left(\widetilde{\phi}^2, n^r\right)\\&\qquad\qquad+C_1(\pa_x\widetilde{\phi}, \pa_x\widetilde{\phi})-\frac{2}{3}C_1\left(\widetilde{\phi}^3, n^r\right)\Big\}\\
&+\int_0^T\left\{(\pa_xu^r,n\widetilde{u}^2)+\left(\pa_xu^r, (\pa_x\widetilde{n})^2n^{-2}\right)
+(\pa^2_x\widetilde{\phi},\pa^2_x\widetilde{\phi})\right\}dt
\\&+\int_0^T\left\{(\pa_x\widetilde{u},\pa_x\widetilde{u})+\left(n^{-1}, (\pa_x\widetilde{n})^2\right)+\left(n^re^{-\widetilde{\phi}}\pa_x\widetilde{\phi},\pa_x\widetilde{\phi}\right)\right\}dt
\\
\lesssim & \|[\widetilde{n},\widetilde{u},\widetilde{\phi}](0)\|^2+\|\pa_x [\widetilde{n},\widetilde{\phi}](0)\|^2
+\eps_0\int_0^T\|\pa^2_x\widetilde{u}\|^2dt+\eps^{\frac{\sigma}{1+\sigma}}.
\end{split}
\end{equation}

\medskip
\noindent{\bf Step 3.} {\it Dissipation of $\pa_x^2\widetilde{u}$.}

By differentiating \eqref{tu} in $x$ and taking the inner products of the resulting identity with $\pa_x\widetilde{u}$
with  respect to $x$ over $\R$, we have
\begin{multline}\label{tu.uip}
(\pa_t \pa_x\widetilde{u},\pa_x\widetilde{u})+(\pa_x(\widetilde{u}\pa_x \widetilde{u}+\widetilde{u}\pa_x u^r+\pa_x \widetilde{u}u^r), \pa_x\widetilde{u})+A\left(\pa_x\left(\frac{\pa_xn}{n}-\frac{\pa_xn^r}{n^r}\right), \pa_x\widetilde{u}\right)\\
+(-\pa^2_x\widetilde{\phi}, \pa_x\widetilde{u})+\left(-\pa_x\left(\frac{\pa_x^2\widetilde{u}}{n}\right),\pa_x\widetilde{u}\right)
+\left(-\pa_x\left(\frac{\pa_x^2u^r}{n}\right),\pa_x\widetilde{u}\right)=0.
\end{multline}
We estimate each inner product term on the left as follows. First, the first term is equal to $\frac{1}{2}\frac{d}{dt}(\pa_x \widetilde{u},\pa_x \widetilde{u})$.
The second term is computed as
\begin{equation*}
(\pa_x(\widetilde{u}\pa_x \widetilde{u}+\widetilde{u}\pa_x u^r+\pa_x \widetilde{u}u^r), \pa_x\widetilde{u})={ \frac{3}{2} (\pa_x u^r, (\pa_x \widetilde{u})^2)
-(\widetilde{u}\pa_x^2 \widetilde{u},\pa_x \widetilde{u}) + (\widetilde{u}\pa_x^2 u^r, \pa_x \widetilde{u})},
\end{equation*}
where the first  term on the right are good and the rest two terms are bounded as
\begin{multline*}
{|(\widetilde{u}\pa_x^2 \widetilde{u},\pa_x \widetilde{u})| + |(\widetilde{u}\pa_x^2 u^r, \pa_x \widetilde{u})|}
\leq \eta  (\|\pa_x \widetilde{u} \|^2+\|\pa_x^2  \widetilde{u}\|^2)  +C_\eta \| \widetilde{u}\|_{L^\infty}^2 (\|\pa_x \widetilde{u} \|^2+\|\pa_x^2 u^r\|^2)\\
\leq \eta  (\|\pa_x \widetilde{u} \|^2+\|\pa_x^2  \widetilde{u}\|^2)  +C_\eta \eps_0^2\|\pa_x \widetilde{u} \|^2+C_\eta \eps_0^2 \eps (1+t)^{-2},
\end{multline*}
with an arbitrary constant $0<\eta<1$. The third term can be rewritten as
\begin{multline*}
A\left(\pa_x\left(\frac{\pa_xn}{n}-\frac{\pa_xn^r}{n^r}\right), \pa_x\widetilde{u}\right)
=A\left(\frac{\pa_x\widetilde{n}}{n}-\frac{\widetilde{n} \pa_x n^r}{n n^r},-\pa_x^2 \widetilde{u}\right)\\
=A\left(\frac{\pa_x\widetilde{n}}{n},-\pa_x^2 \widetilde{u}\right)
+A\left(-\frac{\pa_x \widetilde{n} \pa_x n^r +\widetilde{n} \pa_x^2 n^r}{n n^r},\pa_x\widetilde{u}\right)
+A\left(\frac{\widetilde{n} \pa_x n^r (\pa_x n n^r +n \pa_x n^r)}{ (nn^r)^2},\pa_x\widetilde{u}\right),
\end{multline*}
where the first inner product on the right is bounded by $\eta \|\pa_x^2 \widetilde{u}\|^2+C_\eta \|\pa_x \widetilde{n}\|^2$, the second one is bounded by
\begin{equation*}
\eta \|\pa_x \widetilde{u}\|^2 +C_\eta (\|\pa_xn^r\|_{L^\infty}^2\|\pa_x \widetilde{n}\|^2 +\|\widetilde{n}\|_{L^\infty}^2 \|\pa_x^2 n^r\|^2)
\leq \eta \|\pa_x \widetilde{u}\|^2 +C\eps \|\pa_x\widetilde{n}\|^2 +C\eps_0^2 \eps (1+t)^{-2},
\end{equation*}
and the final one is bounded by
\begin{eqnarray*}
&&\eta \|\pa_x \widetilde{u}\|^2 +C_\eta \|\widetilde{n}\|_{L^\infty}^2\|\pa_x n^r\|_{L^\infty}^2 \|\pa_x \widetilde{n}\|^2 \|n^r\|_{L^\infty}^2
+C_\eta \|\widetilde{n}\|_{L^\infty}^2 \|\pa_x n^r\|_{L^\infty}^2 \|\pa_x n^r\|^2\\
&&\leq \eta \|\pa_x \widetilde{u}\|^2 + C_\eta\eps_0^2\|\pa_x \widetilde{n}\|^2 +C_\eta \|\widetilde{n}\|\cdot \|\pa_x \widetilde{n}\|\cdot \|\pa_x n^r\|\cdot \|\pa_x^2n^r\|\cdot \|\pa_xn^r\|^2\\
&&\leq  \eta \|\pa_x \widetilde{u}\|^2 + C_\eta\eps_0^2\|\pa_x \widetilde{n}\|^2 +C_\eta\|\pa_x \widetilde{n} \|^2 +C_\eta  \|\widetilde{n}\|^2 \|\pa_x n^r\|^6\|\pa_x^2n^r\|^2\\
&&\leq   \eta \|\pa_x \widetilde{u}\|^2 + C_\eta\eps_0^2\|\pa_x \widetilde{n}\|^2 +C_\eta\|\pa_x \widetilde{n} \|^2 +C_\eta\eps_0^2 \eps (1+t)^{-5}.
\end{eqnarray*}
For the fourth term on the left hand side of \eqref{tu.uip}, it is direct to see
\begin{equation*}
|(\pa^2_x\widetilde{\phi}, \pa_x\widetilde{u})|\leq \eta {\|\pa_x\widetilde{u}\|^2} +C_\eta \|\pa_x^2\widetilde{\phi}\|^2
\end{equation*}
The fifth term is a good term given by $(n^{-1},(\pa_x^2\widetilde{u} )^2)$. The final term is bounded as
\begin{equation*}
\left|\left(-\pa_x\left(\frac{\pa_x^2u^r}{n}\right),\pa_x\widetilde{u}\right)\right|\leq \eta \|\pa_x^2\widetilde{u}\|^2 +C_\eta\left\|\frac{\pa_x^2u^r}{n}\right\|^2\leq  \eta \|\pa_x^2\widetilde{u}\|^2 +C_\eta\eps (1+t)^{-2}.
\end{equation*}
Plugging these estimates back to \eqref{tu.uip} gives
\begin{multline}\label{tu.uip.p1}
{\frac{1}{2}\frac{d}{dt}(\pa_x\widetilde{u},\pa_x\widetilde{u})+\frac{3}{2}(\pa_x u^r, (\pa_x\widetilde{u})^2)
+\left(\frac{\pa_x^2\widetilde{u}}{n},\pa^2_x\widetilde{u}\right)}\\
\leq
\eta\left\{\|\pa_x \widetilde{u}\|^2+\|\pa^2_x \widetilde{u}\|^2\right\}+C_\eta\eps_0^2\|\pa_x \widetilde{u}\|^2
+C_\eta \eps(1+t)^{-2}+C_\eta(\|\pa_x \widetilde{n}\|^2+\|\pa^2_x\widetilde{\phi}\|^2).
\end{multline}
Integrating \eqref{tu.uip.p1} with respect to $t$ over $[0,T]$ and letting $0<\eta<1$ be suitably small, one further has
\begin{multline}\label{tu.uip.p2}
\sup\limits_{0\leq t\leq T}(\pa_x\widetilde{u},\pa_x\widetilde{u})+\int_0^T(\pa_x u^r, (\pa_x\widetilde{u})^2)dt
+\int_0^T\left(\frac{\pa_x^2\widetilde{u}}{n},\pa^2_x\widetilde{u}\right)dt\\
\lesssim \|\pa_x \widetilde{u}(0)\|^2+(\eta+C_\eta\eps_0^2)\int_0^T\|\pa_x \widetilde{u}\|^2dt
+C_\eta\int_0^T(\|\pa_x \widetilde{n}\|^2+\|\pa^2_x\widetilde{\phi}\|^2)dt
+C_\eta\eps.
\end{multline}
Thus combining \eqref{ztru.sum2} and \eqref{tu.uip.p2} yields
\begin{equation}
\begin{split}
&\sup\limits_{0\leq t\leq T}\Big\{2(\widetilde{u}, \pa_x\widetilde{n})+((\pa_x\widetilde{n})^2, n^{-2})
+C_1(\widetilde{u}, n\widetilde{u})+2C_1(n, \psi)\\
&\qquad\qquad+
C_1(\widetilde{\phi}^2, n^r)+C_1(\pa_x\widetilde{\phi}, \pa_x\widetilde{\phi})
-\frac{2}{3}C_1(\widetilde{\phi}^3, n^r)\Big\}+\sup\limits_{0\leq t\leq T}(\pa_x\widetilde{u},\pa_x\widetilde{u})\\
&\quad +\int_0^T\left\{(\pa_xu^r,n\widetilde{u}^2)+\left(\pa_xu^r, (\pa_x\widetilde{n})^2n^{-2}\right)
+(\pa_x u^r, (\pa_x\widetilde{u})^2)\right\}dt
\\&\quad+\int_0^T\left\{(\pa_x\widetilde{u},\pa_x\widetilde{u})+\left(n^{-1}, (\pa_x\widetilde{n})^2\right)+(\pa^2_x\widetilde{\phi},\pa^2_x\widetilde{\phi})\right\}dt\\
&\quad+\int_0^T
\left\{\left(n^re^{-\widetilde{\phi}}\pa_x\widetilde{\phi},\pa_x\widetilde{\phi}\right)
+\left(\frac{\pa_x^2\widetilde{u}}{n},\pa^2_x\widetilde{u}\right)\right\}dt
\\
&\leq C\left\|[\widetilde{n}, \widetilde{u}, \widetilde{\phi}](0,x)\right\|^2_{H^1}+C\eps^{\frac{\sigma}{1+\sigma}}.
\end{split}
\label{total.eng2.}
\end{equation}
Notice that
\begin{multline*}
2(\widetilde{u}, \pa_x\widetilde{n})+((\pa_x\widetilde{n})^2, n^{-2})
+C_1(\widetilde{u}, n\widetilde{u})+2C_1(n, \psi)+C_1
\left(\widetilde{\phi}^2, n^r\right)\\+C_1(\pa_x\widetilde{\phi}, \pa_x\widetilde{\phi})-\frac{2}{3}C_1\left(\widetilde{\phi}^3, n^r\right)
\thicksim \left\|\left[\widetilde{n}, \widetilde{\phi}\right]\right\|^2_{H^1}+\|\widetilde{u}\|^2,
\end{multline*}
and
\begin{multline*}
(\pa_x\widetilde{u},\pa_x\widetilde{u})+\left(n^{-1}, (\pa_x\widetilde{n})^2\right)
+(\pa^2_x\widetilde{\phi},\pa^2_x\widetilde{\phi})
+\left(n^re^{-\widetilde{\phi}}\pa_x\widetilde{\phi},\pa_x\widetilde{\phi}\right)
+\left(\frac{\pa_x^2\widetilde{u}}{n},\pa^2_x\widetilde{u}\right)\\
\thicksim \left\|[\pa_x\widetilde{n}, \pa_x\widetilde{u}, \pa_x\widetilde{\phi}, \pa^2_x\widetilde{\phi}, \pa^2_x\widetilde{u}]\right\|^2,
\end{multline*}
according to the a priori assumption \eqref{pri.ass.} and the fact that $n_+>n_->0$, and furthermore the Poisson equation \eqref{tphy} implies that for any $t\geq 0$,
\begin{eqnarray*}
\|\widetilde{\phi}(t)\|_{H^1}^2&\leq &C\|\widetilde{n}(t)\|^2 +C\|\pa_x^2 \phi^r(t)\|^2\\
&\leq &C\|\widetilde{n}(t)\|^2 +C \|\pa_x^2 n^r(t)\|^2 +C\|\pa_xn^r(t)\|_{L^4}^4\\
&\leq &C\|\widetilde{n}(t)\|^2 +C \|\pa_x^2 n^r(t)\|^2 +C\|\pa_x n^r(t)\|^3\|\pa_x^2n^r(t)\|\\
&\leq &C\|\widetilde{n}(t)\|^2 +C\eps (1+t)^{-2}+C\eps^2 (1+t)^{-1},
\end{eqnarray*}
and hence in particular,
\begin{equation*}
\|\widetilde{\phi}(0)\|_{H^1}^2\leq C \left[\|\widetilde{n}(0)\|^2+ \eps\right].
\end{equation*}
The above observations together with \eqref{total.eng2.} give \eqref{total.eng1.}. This then completes the proof of Proposition \ref{a.pri.}.
\end{proof}

We are now in a position to complete the

\begin{proof}[Proof of Theorem \ref{main.res.}.]
The existence of the solution follows from the standard continuity argument based on the local existence and the a priori
estimate in Proposition \ref{a.pri.}. Therefore, it suffices to show the large time behavior of the solution as $t\rightarrow+\infty$.
For this, we begin with 
the following estimates
\begin{equation}\label{latm1}
\lim\limits_{t\rightarrow+\infty}\left\|\pa_x[\widetilde{n}, \widetilde{u}](t)\right\|^2\rightarrow 0,
\end{equation}
and
\begin{equation}\label{latm2}
\lim\limits_{t\rightarrow+\infty}\left\{\left\|\sqrt{n^re^{-\widetilde{\phi}}}\pa_x\widetilde{\phi}(t)\right\|^2
+\left\|\pa^2_x\widetilde{\phi}(t)\right\|^2\right\}\rightarrow 0.
\end{equation}
Indeed, {from \eqref{d.rho}, \eqref{tu.uip}, \eqref{total.eng.} and Lemma \ref{cl.Re.Re2.}, one can show that
\begin{equation*}
\begin{split}
\int_{0}^{+\infty}\left|\frac{d}{dt}\left\|\pa_x\widetilde{n}\right\|^2\right|dt
=2\int_{0}^{+\infty}\left|\left(\pa_t\pa_x\widetilde{n},\pa_x\widetilde{n}\right)\right|dt<+\infty,
\end{split}
\end{equation*}}
and
$$
\int_{0}^{+\infty}\left|\frac{d}{dt}\left\|\pa_x\widetilde{u}\right\|^2\right|dt
=\frac{1}{2}\int_{0}^{+\infty}\left|\left(\pa_t\pa_x\widetilde{u},\pa_x\widetilde{u}\right)\right|dt<+\infty.
$$
Then \eqref{latm1} follows from the above two inequalities and \eqref{total.eng.}.

The estimates for \eqref{latm2} will be a little different from the usual one as in \cite{MN86}. By \eqref{d.phi} and
applying Lemma \ref{cl.Re.Re2.} and \eqref{total.eng.}, we see that
\begin{equation}\label{latm3}
\int_{0}^{+\infty}\left|\frac{d}{dt}\left[\left(n^re^{-\widetilde{\phi}}\pa_x\widetilde{\phi},\pa_x\widetilde{\phi}\right)
+\left(\pa^2_x\widetilde{\phi},\pa^2_x\widetilde{\phi}\right)\right]\right|dt
\lesssim C+\int_{0}^{+\infty}\|\pa_t\widetilde{\phi}\|^2dt.
\end{equation}
On the other hand, from \eqref{ptphyL2}, one has
\begin{equation}\label{latm4}
\int_{0}^{+\infty}\left\|\pa_t\pa_x\widetilde{\phi}\right\|^2dt+\int_{0}^{+\infty}
\left\|\pa_t\widetilde{\phi}\right\|^2dt
<+\infty.
\end{equation}
Thus \eqref{latm3}, \eqref{latm4} and \eqref{total.eng.} give \eqref{latm2}.

We consequently get from \eqref{latm1}, \eqref{latm2} and the Sobolev inequality \eqref{sob.ine.} that
\begin{equation*}
\lim\limits_{t\rightarrow+\infty}\sup\limits_{x\in\R}\left|\left[n(t,x)-n^{r}(x,t), u(t,x)- u^{r}(x,t), \phi(t,x)+\ln n^{r}(t,x)\right]\right|=0.
\end{equation*}
Furthermore, by the construction of the smooth approximation function of the rarefaction wave, in terms of $(iii)$ in Lemma \ref{cl.Re.Re2.}, we obtain the desired asymptotic behavior of the solution
\begin{equation*}
\lim\limits_{t\rightarrow+\infty}\sup\limits_{x\in\R}\left|\left[n(t,x)-n^{R}\left(\frac{x}{t}\right), u(t,x)- u^{R}\left(\frac{x}{t}\right), \phi(t,x)+\ln n^{R} \left(\frac{x}{t}\right)\right]\right|=0.
\end{equation*}
Hence \eqref{asp} holds true. This ends the proof of Theorem \ref{main.res.}.
\end{proof}

\section{Two-fluid case}
%

In this section, we will apply the similar technique developed in the previous section to study the  stability of rarefaction waves for the Cauchy problem on the two-fluid NSP system  \eqref{NSP2}, \eqref{I.D.2}, \eqref{I.D.2-c}, \eqref{bd2}. The new difficulty appears due to the fact that the electric field $\pa_x \phi$ is no longer $L^2$ integrable in space and time due to the structure of the Poisson equation in \eqref{NSP2}. It will be seen from the later proof that the new trouble term
$$
\frac{1}{2}\int_\R (\pa_x\widetilde{\phi})^2 \pa_xu^r\,dx
$$
can be controlled by taking the difference of two momentum equations with different weights so as to balance the different masses $m_\al$ $(\al=i,e)$ of two  fluids. Therefore, the structure of two-fluid model indeed plays a key role in the stability analysis of nontrivial rarefaction waves.

%

%
%
%
%
\subsection{Approximate rarefaction waves}

The large time solution of \eqref{NSP2} is assumed to be the rarefaction wave $[n^r,u^r]$ under the quasineutral assumption. Thus, we may set
$$
n_i=n_e=n^r,\ \ u_i=u_e=u^r.
$$
By neglecting the second-order derivatives $\pa_x^2u^r$ and $\pa_x^2\phi^r$ which decay in time faster than other low-order terms, $[n^r,u^r]$ is expected to satisfy the following equations:
\begin{eqnarray}
&&\pa_t n^r+\pa_x(n^ru^r)=0,\label{2eulern}\\
&&m_in^r(\pa_t u^r+u^r\pa_x u^r)+T_i\pa_x n^r-n^r\pa_x\phi^r=0,\label{2euleri}\\
&&m_en^r(\pa_t u^r+u^r\pa_x u^r)+T_e\pa_x n^r+n^r\pa_x\phi^r=0.\label{2eulere}
\end{eqnarray}
It is straightforward to verify that system \eqref{2eulern}, \eqref{2euleri}, \eqref{2eulere} holds true if  $[n^r,u^r]$ satisfies
\begin{equation}\label{2rare}
\left\{
\begin{array}{l}
\dis \pa_t n^r+\pa_x(n^ru^r)=0,\\[3mm]
\dis n^r(\pa_t u^r+u^r\pa_x u^r)+\frac{T_i+T_e}{m_i+m_e}\pa_x n^r=0,\\[3mm]
\dis \phi^r=\frac{T_im_e-T_em_i}{m_i+m_e}\ln n^r.
\end{array}\right.
\end{equation}
%
%
Therefore, the large-time asymptotic equations of \eqref{NSP2} of
the two-fluid NSP system are expected to take the form of the following quasineutral Euler equations
\begin{equation}\label{2Euler}
\left\{\begin{array}{l}
\dis \pa_t\overline{n}+\pa_x(\overline{n}~\overline{u})=0,\\[3mm]
\dis \overline{n}(\pa_t \overline{u}+\overline{u}\pa_x \overline{u})+\frac{T_i+T_e}{m_i+m_e}\pa_x \overline{n}=0,
\end{array}
\right.
\end{equation}
with the potential function $\overline{\phi}$ in large time determined by
\begin{equation*}
\overline{\phi}=\frac{T_im_e-T_em_i}{m_i+m_e} \ln \overline{n}.
\end{equation*}
Initial data for \eqref{2Euler} are given by
\begin{eqnarray}\label{2ME.I.D.}
\begin{array}{rll}
\begin{split}
[\overline{n}, \overline{u}](0,x)&=[\overline{n}_0,\overline{u}_0](x)=\left\{\begin{array}{rll}[n_-,u_-],&\ \ x>0,\\[3mm]
[n_+,u_+],&\ \ x<0,
\end{array}
\right.\\[2mm]
\overline{\phi}(0,x)&=\overline{\phi}_0=\frac{T_im_e-T_em_i}{m_i+m_e}\ln \overline{n}_0.
\end{split}
\end{array}
\end{eqnarray}
Similarly as before,  \eqref{2Euler}
have two characteristics
$$
\la_1=u-c,\ \ \la_2=u+c,\ \ \mbox{with}\ c=\sqrt{\frac{T_i+T_e}{m_i+m_e}},
$$
where we have used the same notation for $c$ as in \eqref{1cha}. In the completely same way as in the previous section, concerning the Riemann problem \eqref{2Euler} and \eqref{2ME.I.D.} on the quasineutral Euler system, one can construct the $2$-rarefaction wave $[n^{R},u^{R}](x/t)$ from the Burgers' equation and further construct its smooth approximation $[n^r,u^r]$ satisfying the properties given in Lemma \ref{cl.Re.Re2.}.

\subsection{Stability of the rarefaction wave}
In this subsection, we study the stability of the approximate rarefaction wave $[n^{r}, u^{r}]$
for the Cauchy problem  \eqref{NSP2}, \eqref{I.D.2}, \eqref{I.D.2-c}, \eqref{bd2}. For this purpose, let us define the perturbation
\begin{equation*}
[\widetilde{n}_\al, \widetilde{u}_\al, \widetilde{\phi}]=[\widetilde{n}_{i,e}, \widetilde{u}_{i,e}, \widetilde{\phi}]=[n_\al-n^r, u_\al-u^r, \phi-\phi^r](t,x),\quad \al=i,e,
\end{equation*}
where $\phi^r$ is defined by $n^r$ in terms of the third equation of \eqref{2rare}.
Then $[\widetilde{n}_\al,\widetilde{u}_\al, \widetilde{\phi}]$ satisfies
\begin{eqnarray}
&&\pa_t\widetilde{n}_i+\pa_x(n_i u_i)-\pa_x(n^ru^r)=0,\label{2tni}\\
&&m_i\left(\pa_t \widetilde{u}_i+u_i\pa_x u_i-u^r\pa_x u^r\right)+T_i(\pa_x\ln n_i-\pa_x\ln n^r)
-\pa_x\widetilde{\phi}=\frac{\pa_x^2u_i}{n_i},\label{2tui}\\
&&\pa_t\widetilde{n}_e+\pa_x(n_e u_e)-\pa_x(n^ru^r)=0,\label{2tne}\\
&&m_e\left(\pa_t \widetilde{u}_e+u_e\pa_x u_e-u^r\pa_x u^r\right)+T_e(\pa_x\ln n_e-\pa_x\ln n^r)
+\pa_x\widetilde{\phi}=\frac{\pa_x^2u_e}{n_e},\label{2tue}\\
&&\pa_x^2\widetilde{\phi}=\widetilde{n}_i-\widetilde{n}_e-\pa_x^2\phi^r.\label{2tphy}
\end{eqnarray}
Here we have set $\mu_i=\mu_e=1$ without loss of generality. Initial data for $[\widetilde{n}_\al,\widetilde{u}_\al]$ are given by
\begin{equation}
\label{2tid}
[\widetilde{n}_\al,\widetilde{u}_\al](0,x)
=[{n}_{\al0}(x)-n^r(0,x),{u}_{\al 0}-u^r(0,x)],\quad \al=i,e.
\end{equation}
From now on it is always assumed that for any given $t\geq 0$, $\widetilde{\phi}(t,x)$ is determined in terms of the elliptic equation \eqref{2tphy} under the boundary condition that $\widetilde{\phi}(t,x)$ tends to zero as $x$ goes to $\pm\infty$.

\begin{theorem}\label{2main.res.}
Let ${[n_+, u_+]}\in R_2(n_-, u_-)$, where $R_2(n_-,u_-)$ is given in \eqref{def.r2} with $n_->0$ and $c=\sqrt{\frac{T_i+T_e}{m_i+m_e}}$. Assume
\begin{equation*}
\phi_{\pm}=\frac{T_i m_e-T_em_i}{m_i+m_e}\ln n_{\pm}.
\end{equation*}
There are constants $\eps_0>0$, $0<\si<1$, and $C_0>0$ such that if
$$
\sum\limits_{\al=i,e}\left\|[\widetilde{n}_{\al0},\widetilde{u}_{\al0}]\right\|^2_{H^1}+\|\pa_x\widetilde{\phi}_0\|^2+\eps\leq \eps_0^2,
$$
where $\eps>0$ is the parameter appearing in \eqref{cl.Re.cons.}, then the Cauchy problem of
the NSP system \eqref{2tni}, \eqref{2tui}, \eqref{2tne}, \eqref{2tue}, \eqref{2tphy} and \eqref{2tid}
admits a unique global solution $[\widetilde{n}_{i,e},\widetilde{u}_{i,e},\widetilde{\phi}](t,x)$ satisfying
\begin{multline}\label{2total.eng.}
\sup\limits_{t\geq0} \left\{\sum\limits_{\al=i,e}\left\|\left[\widetilde{n}_{\al}, \widetilde{u}_{\al}\right](t)\right\|^2_{H^1}
+\left\|\pa_x\widetilde{\phi}(t)\right\|^2_{H^1}\right\}
+\sum\limits_{\al=i,e}\int_0^{+\infty}\left\|\sqrt{\pa_xu^r}\left[ \widetilde{u}_{\al}, \pa_x\widetilde{n}_{\al},\pa_x\widetilde{u}_{\al}\right](t)\right\|^2dt
\\+\sum\limits_{\al=i,e}\int_0^{+\infty}\left\|\left[\pa_x\widetilde{n}_{\al}, \pa_x\widetilde{u}_{\al}, \pa^2_x\widetilde{u}_{\al}\right](t)\right\|^2dt
+\int_0^{+\infty}\left\| \pa^2_x\widetilde{\phi}(t)\right\|^2dt
\leq C_0\eps_0^{\frac{2\sigma}{1+\sigma}}.
\end{multline}
Moreover, the solution to the original Cauchy problem  \eqref{NSP2}, \eqref{I.D.2}, \eqref{I.D.2-c}, \eqref{bd2} tends time-asymptotically to the rarefaction in the sense that
\begin{equation}\label{2asp}
\lim\limits_{t\rightarrow+\infty}\sup\limits_{x\in\R}\left|\left[n_\al(t,x)-n^{R}\left(\frac{x}{t}\right), u_\al(t,x)- u^{R}\left(\frac{x}{t}\right)\right]\right|=0,\ \ \al=i,\ e,
\end{equation}
and
\begin{equation}\label{2asp-ad}
\lim\limits_{t\rightarrow+\infty}\sup\limits_{x\in\R}\left|\pa_x\left[\phi(t,x)-\frac{T_im_e-T_em_i}{m_i+m_e}\ln n^{r} (t,x)\right]\right|=0.
\end{equation}
\end{theorem}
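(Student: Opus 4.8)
The plan is to mirror the energy method of Section 2. As there, I would reduce Theorem \ref{2main.res.} to the two-fluid analogue of Proposition \ref{a.pri.}: under the a priori hypothesis $\sup_{0\le t\le T}\{\sum_{\al=i,e}\|[\widetilde{n}_\al,\widetilde{u}_\al](t)\|_{H^1}^2+\|\pa_x\widetilde{\phi}(t)\|_{H^1}^2\}+\eps\le\eps_0^2$, derive the bound in \eqref{2total.eng.} with the right-hand side replaced by the initial energy plus $\eps^{\sigma/(1+\sigma)}$; local existence is standard and the a priori estimate then closes the usual continuity argument. The estimate is built in the same three stages: a zero-order estimate, the dissipation of $\pa_x\widetilde{n}_\al$ and $\pa_x^2\widetilde{\phi}$, and the dissipation of $\pa_x^2\widetilde{u}_\al$. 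Throughout, $[n^r,u^r,\phi^r]$ satisfies \eqref{2rare} and obeys Lemma \ref{cl.Re.Re2.}, and the elliptic equation \eqref{2tphy} is used freely to trade $\pa_x^2\widetilde{\phi}$ for $\widetilde{n}_i-\widetilde{n}_e$.

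For the zero-order step I multiply \eqref{2tni}, \eqref{2tne} by species potentials $\psi(n_\al,n^r)$ as in \eqref{def.psi} (with $A$ replaced by $T_\al$) and \eqref{2tui}, \eqref{2tue} by $n_\al\widetilde{u}_\al$, and sum over $\al=i,e$. Since the field enters the two momentum balances with opposite signs, its contribution collapses to $-(\pa_x\widetilde{\phi},\,n_i\widetilde{u}_i-n_e\widetilde{u}_e)$. Using $n_i\widetilde{u}_i-n_e\widetilde{u}_e=(n_iu_i-n_eu_e)-u^r(\widetilde{n}_i-\widetilde{n}_e)$, integrating by parts, and invoking the difference of \eqref{2tni}, \eqref{2tne} together with $\widetilde{n}_i-\widetilde{n}_e=\pa_x^2\widetilde{\phi}+\pa_x^2\phi^r$, this term becomes $\tfrac12\frac{d}{dt}\|\pa_x\widetilde{\phi}\|^2-\tfrac12(\pa_xu^r,(\pa_x\widetilde{\phi})^2)$ plus remainders controlled as in $I_5,I_6,I_{10}$ of Section 2. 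Here the crucial structural difference from the one-fluid case appears: the Poisson equation carries no $e^{-\widetilde{\phi}}$ nonlinearity, so that, although the Taylor device \eqref{taylor} is no longer needed, no $\|\pa_x\widetilde{\phi}\|^2$ dissipation is available, and the second term is precisely the wrong-signed trouble term $\tfrac12\int_\R(\pa_x\widetilde{\phi})^2\pa_xu^r\,dx$ announced in the introduction, which can no longer be absorbed.

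Cancelling this trouble term is the step I expect to be the main obstacle. The idea is to test a mass-weighted difference of \eqref{2tui} and \eqref{2tue}, with the weights tuned to $m_i$ and $m_e$, against $\pa_x\widetilde{\phi}$: the antisymmetric field terms then generate a genuine $\|\pa_x\widetilde{\phi}\|^2$ dissipation, while the convective parts $u_\al\pa_xu_\al-u^r\pa_xu^r$, after integration by parts, produce an opposite-sign copy $+\tfrac12(\pa_xu^r,(\pa_x\widetilde{\phi})^2)$ that cancels the trouble term; the mass-balancing of the weights is exactly what makes the two coefficients match. The inertial terms $\pa_t\widetilde{u}_\al$ are turned into $\frac{d}{dt}$-quantities by integration by parts in $x$ and by \eqref{2tni}, \eqref{2tne}, the resulting factors being absorbed through $\|\pa_t\widetilde{\phi}\|$, $\|\pa_t\pa_x\widetilde{\phi}\|$ bounds of the type \eqref{ptphyL2}, Lemma \ref{cl.Re.Re2.}, and the smallness of $\eps_0$. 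In parallel, the dissipation of $\pa_x\widetilde{n}_\al$ proceeds as in \eqref{d.rho}--\eqref{tu.mp}: the term $-(\pa_x\widetilde{\phi},\pa_x\widetilde{n}_i-\pa_x\widetilde{n}_e)$ coming from the momentum equations tested with $\pa_x\widetilde{n}_\al$ cancels against the corresponding term produced by differentiating \eqref{2tphy} and testing $\pa_x^3\widetilde{\phi}=\pa_x\widetilde{n}_i-\pa_x\widetilde{n}_e-\pa_x^3\phi^r$ with $\pa_x\widetilde{\phi}$, and the same differentiated Poisson relation delivers the $\|\pa_x^2\widetilde{\phi}\|^2$ dissipation, while $\sup_t\|\pa_x^2\widetilde{\phi}\|^2$ is read off directly from \eqref{2tphy}.

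Step 3 is then routine: differentiating \eqref{2tui}, \eqref{2tue} in $x$, testing with $\pa_x\widetilde{u}_\al$ and summing over $\al$ produces $\|\pa_x^2\widetilde{u}_\al\|^2$ dissipation, with the coupling $(\pa_x^2\widetilde{\phi},\pa_x\widetilde{u}_\al)$ absorbed by the $\|\pa_x^2\widetilde{\phi}\|^2$ already gained, exactly as in \eqref{tu.uip}--\eqref{tu.uip.p2}. Taking a suitable linear combination of the three stages with a large constant, choosing $\eps,\eps_0,\eta$ small, and applying Gronwall's inequality closes the a priori estimate and hence gives \eqref{2total.eng.}. For the large-time behavior I argue as in the proof of Theorem \ref{main.res.}: the integrability in \eqref{2total.eng.} together with $\int_0^{+\infty}\bigl|\tfrac{d}{dt}\|\pa_x[\widetilde{n}_\al,\widetilde{u}_\al]\|^2\bigr|\,dt<+\infty$ and the analogous control of $\pa_x^2\widetilde{\phi}$ force $\|\pa_x[\widetilde{n}_\al,\widetilde{u}_\al](t)\|\to0$ and $\|\pa_x^2\widetilde{\phi}(t)\|\to0$ as $t\to+\infty$; the Sobolev inequality \eqref{sob.ine.} upgrades these to uniform convergence of $[\widetilde{n}_\al,\widetilde{u}_\al]$ and of $\pa_x\widetilde{\phi}$ to zero, and Lemma \ref{cl.Re.Re2.}(iii) then yields \eqref{2asp} and \eqref{2asp-ad}.
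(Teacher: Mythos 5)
Your overall architecture is the paper's: reduction to an a priori estimate under the assumption \eqref{2pri.ass.}, the same three stages, a Gronwall closure, and the same large-time argument; your zero-order reduction of the electrostatic contribution to $\tfrac12\tfrac{d}{dt}\|\pa_x\widetilde{\phi}\|^2-\tfrac12((\pa_x\widetilde{\phi})^2,\pa_xu^r)$ plus remainders is exactly \eqref{2zeui+ue}, and your Steps 2 and 3 and the asymptotics paragraph match the paper. However, the one genuinely new idea of the two-fluid section --- how the trouble term $\tfrac12((\pa_x\widetilde{\phi})^2,\pa_xu^r)$ is cancelled --- is mis-stated in a way that would not go through. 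You propose to test the mass-weighted difference of \eqref{2tui} and \eqref{2tue} against $\pa_x\widetilde{\phi}$ and claim (i) the antisymmetric field terms yield genuine $\|\pa_x\widetilde{\phi}\|^2$ dissipation and (ii) the convective parts $u_\al\pa_xu_\al-u^r\pa_xu^r$, after integration by parts, produce an opposite-sign copy of $\tfrac12(\pa_xu^r,(\pa_x\widetilde{\phi})^2)$. Claim (ii) is structurally impossible: every inner product arising from the convective terms against the multiplier $\pa_x\widetilde{\phi}$ contains only \emph{one} factor of $\pa_x\widetilde{\phi}$, so no quadratic term $((\pa_x\widetilde{\phi})^2,\pa_xu^r)$ can emerge from them. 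In the paper the multiplier is instead $\pm\pa_x\widetilde{\phi}\,\pa_xu^r$ (with weights $\be/(2(\be+\ga))$ and $-\ga/(2(\be+\ga))$), and it is the electrostatic terms $\mp\pa_x\widetilde{\phi}$ themselves that supply the second factor of $\pa_x\widetilde{\phi}$ and reproduce $-\tfrac12((\pa_x\widetilde{\phi})^2,\pa_xu^r)$ exactly; see \eqref{2de.el}.

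Claim (i) also fails, and indeed no $\int_0^T\|\pa_x\widetilde{\phi}\|^2dt$ dissipation appears in \eqref{2total.eng.} --- only $\int_0^T\|\pa_x^2\widetilde{\phi}\|^2dt$. If you test the weighted difference with $\pa_x\widetilde{\phi}$ and move $\pa_t$ off the inertial terms, you must handle $(\be m_i\widetilde{u}_i-\ga m_e\widetilde{u}_e,\pa_t\pa_x\widetilde{\phi})$, and by \eqref{2tphy}, \eqref{2tni}, \eqref{2tne} one has $\pa_t\pa_x\widetilde{\phi}=n_e u_e-n_iu_i-\pa_t\pa_x\phi^r$, whose leading part is $-n^r(\widetilde{u}_i-\widetilde{u}_e)$ modulo perturbative terms. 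With $\be m_i=\ga m_e$ and the sign needed to make the field term dissipative, this generates the full-strength, wrong-signed term $-\be m_i\,(n^r,(\widetilde{u}_i-\widetilde{u}_e)^2)$, which no available dissipation controls on the whole line (there is no Poincar\'e inequality, and only $\pa_x\widetilde{u}_\al$ and $\sqrt{\pa_xu^r}\,\widetilde{u}_\al$ are dissipated). The paper's $\pa_xu^r$-weighted multiplier avoids precisely this: the analogous quadratic velocity terms there carry the weight $\pa_xu^r$ and are absorbed into half of the good term $I_0=m_i(\widetilde{u}_i^2,n_i\pa_xu^r)+m_e(\widetilde{u}_e^2,n_e\pa_xu^r)$, and the role of the mass balance $\be m_i=\ga m_e$ is not to ``match coefficients'' of a trouble-term copy but to make the combined integrand in \eqref{2I0+I6} the perfect square $(\sqrt{\ga m_i}\,\widetilde{u}_i+\sqrt{\be m_e}\,\widetilde{u}_e)^2\pa_xu^r\ge0$. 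Since your Step-1 cancellation is the hinge of the whole estimate, this is a genuine gap, even though the surrounding scaffolding is correct.
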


We remark that smallness on $\|\pa_x \widetilde{\phi}_0\|$ is required in Theorem \ref{2main.res.}. In addition, due to the weaker dissipation of the potential function in the two-fluid case, it is not clear to verify the uniform convergence of $\phi(t,x)$ to
$$
\phi^R(x/t)=\frac{T_i m_e-T_em_i}{m_i+m_e}\ln n^R(x/t),
$$
as time goes to infinity. However, \eqref{2asp-ad} still implies the large-time convergence of the electric field $\pa_x \phi$ to the smooth approximate profile $\pa_x \phi^r$.

As in the previous section,
the local existence of the solution $[\widetilde{n},\widetilde{u},\widetilde{\phi}]$  of the
NSP system \eqref{2tni}, \eqref{2tui}, \eqref{2tne}, \eqref{2tue}, \eqref{2tphy} and \eqref{2tid} can be proved by the standard iteration method and its proof is omitted for brevity.
As to the proof of Theorem \ref{2main.res.}, it suffices to show the following a priori estimate.

\begin{proposition}\label{2a.pri.}
Assume that all the conditions in Theorem \ref{2main.res.} hold. There are constants $\eps_0>0$, $0<\si<1$, and $C_0>0$ such that if the smooth solution $[\widetilde{n},\widetilde{u},\widetilde{\phi}]$ to the Cauchy problem
\eqref{2tni}, \eqref{2tui}, \eqref{2tne}, \eqref{2tue}, \eqref{2tphy} and \eqref{2tid} on $0\leq t\leq T$ for $T>0$ satisfies
\begin{equation}\label{2pri.ass.}
\sup\limits_{0\leq t\leq T}\left\{{\sum\limits_{\al=i,e}\left\|\left[\widetilde{n}_{\al}, \widetilde{u}_{\al}\right](t)\right\|^2_{H^1}}+\|\pa_x\widetilde{\phi}(t)\|^2\right\}+\eps\leq \eps_0^2,
\end{equation}
then it holds that
\begin{multline}\label{2total.eng1.}
\sup\limits_{0\leq t\leq T} \left\{\sum\limits_{\al=i,e}\left\|\left[\widetilde{n}_{\al}, \widetilde{u}_{\al}\right](t)\right\|^2_{H^1}
+\left\|\pa_x\widetilde{\phi}(t)\right\|^2_{H^1}\right\}
+\sum\limits_{\al=i,e}\int_0^{T}\left\|\sqrt{\pa_xu^r}\left[ \widetilde{u}_{\al}, \pa_x\widetilde{n}_{\al},\pa_x\widetilde{u}_{\al}\right](t)\right\|^2dt
\\+\sum\limits_{\al=i,e}\int_0^{T}\left\|\left[\pa_x\widetilde{n}_{\al}, \pa_x\widetilde{u}_{\al}, \pa^2_x\widetilde{u}_{\al}\right](t)\right\|^2dt
+\int_0^{T}\left\| \pa^2_x\widetilde{\phi}(t)\right\|^2dt
\\ \leq C_0\left(\sum_{\al=i,e}\left\|[\widetilde{n}_{\al 0}, \widetilde{u}_{\al 0}]\right\|^2_{H^1}+\|\pa_x\widetilde{\phi}_0\|^2+\eps^{\frac{\sigma}{1+\sigma}}\right).
\end{multline}
\end{proposition}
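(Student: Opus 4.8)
The plan is to follow the three-step energy scheme of Proposition~\ref{a.pri.}, now carried out symmetrically over the two species $\al=i,e$ and weighted by the masses $m_\al$ (with $\mu_i=\mu_e=1$ as normalized), but with one essential modification forced by the Poisson equation \eqref{2tphy}. In the one-fluid case the relation $\pa_x^2\phi=n-e^{-\phi}$ produced, after linearization, the genuinely positive dissipation $(n^re^{-\widetilde{\phi}}\pa_x\widetilde{\phi},\pa_x\widetilde{\phi})$ controlling $\|\pa_x\widetilde{\phi}\|^2$; in \eqref{2tphy} the right-hand side is merely $\widetilde{n}_i-\widetilde{n}_e-\pa_x^2\phi^r$, so no such self-dissipation for $\widetilde{\phi}$ is available. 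Consequently $\|\pa_x\widetilde{\phi}\|^2$ can only be \emph{propagated} through the energy identity rather than dissipated, which is precisely why smallness of $\|\pa_x\widetilde{\phi}_0\|$ must be assumed, and the price is a trouble term of the form $\tfrac12(\pa_xu^r,(\pa_x\widetilde{\phi})^2)$ that has to be absorbed by other means.

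First I would carry out the zero-order estimate. Multiplying the continuity equations \eqref{2tni}, \eqref{2tne} by species pressure potentials $\psi_\al(n_\al,n^r)=T_\al\int_{n^r}^{n_\al}(s-n^r)s^{-2}\,ds$ and the momentum equations \eqref{2tui}, \eqref{2tue} by $m_\al n_\al\widetilde{u}_\al$, then summing over $\al=i,e$ and integrating, the good convective term $(\widetilde{u}_\al^2,m_\al n_\al\pa_xu^r)$ is produced exactly as in \eqref{ztu.ip1}. The electric-field contributions now combine into the single cross term $(\pa_x\widetilde{\phi},\,n_e\widetilde{u}_e-n_i\widetilde{u}_i)$. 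Rewriting $n_\al\widetilde{u}_\al=(n_\al u_\al-n^ru^r)-\widetilde{n}_\al u^r$ and using both continuity equations together with the Poisson equation \eqref{2tphy}, this cross term yields on the one hand the energy derivative $\tfrac12\frac{d}{dt}\|\pa_x\widetilde{\phi}\|^2$ (from the $\pa_t\pa_x^2\widetilde{\phi}$ contribution) and on the other hand, after one integration by parts, exactly the trouble term $\tfrac12(\pa_xu^r,(\pa_x\widetilde{\phi})^2)$, together with harmless factors involving $\pa_x^2\phi^r$ controlled by Lemma~\ref{cl.Re.Re2.}.

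The crux, and the step I expect to be the main obstacle, is controlling $\tfrac12(\pa_xu^r,(\pa_x\widetilde{\phi})^2)$ in the absence of any $\|\pa_x\widetilde{\phi}\|^2$ dissipation. Here I would exploit the two-fluid structure: taking the difference of the momentum equations \eqref{2tui} and \eqref{2tue} with weights chosen to balance the mass coefficients $m_i$ and $m_e$, one solves algebraically for $\pa_x\widetilde{\phi}$ in terms of the material accelerations $m_\al\pa_t\widetilde{u}_\al$, the convective terms, the density gradients $T_\al\pa_x\ln n_\al$, and the viscous terms $\pa_x^2u_\al/n_\al$ \textemdash{} none of which contains $\pa_x\widetilde{\phi}$. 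Substituting this expression for one factor of $\pa_x\widetilde{\phi}$ and applying the weighted inequality $|(\pa_xu^r\,\pa_x\widetilde{\phi},g)|\le\tfrac12(\pa_xu^r,(\pa_x\widetilde{\phi})^2)+\tfrac12(\pa_xu^r,g^2)$ (valid since $\pa_xu^r\ge0$), the dangerous quantity reappears with a fractional coefficient that is absorbed into the left-hand side, while the remainders are paired against the available dissipation: $(\pa_xu^r,(\pa_x\widetilde{n}_\al)^2)$ and $(\pa_xu^r,(\pa_x\widetilde{u}_\al)^2)$ belong to the dissipation budget, the viscous remainder contributes $\eps_0\|\pa_x^2\widetilde{u}_\al\|^2$ to be absorbed by Step~3, and the acceleration term $m_\al\pa_t\widetilde{u}_\al$ is handled by integrating by parts in $t$, producing a $\frac{d}{dt}$ of a small cross term plus lower-order pieces requiring control of $\pa_t\pa_x\widetilde{\phi}$ from the differentiated Poisson equation. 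The bound $\pa_xu^r\lesssim\min\{\delta_r\eps,(1+t)^{-1}\}$ from Lemma~\ref{cl.Re.Re2.} guarantees that all returning coefficients are small.

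Finally I would run the higher-order estimates as in Steps~2 and~3 of Proposition~\ref{a.pri.}: differentiating \eqref{2tni}, \eqref{2tne} and \eqref{2tui}, \eqref{2tue} in $x$ and pairing against $\pa_x\widetilde{n}_\al/n_\al^2$, $\pa_x\widetilde{n}_\al$ and $\pa_x\widetilde{u}_\al$ to recover the dissipation of $[\pa_x\widetilde{n}_\al,\pa_x^2\widetilde{u}_\al]$, the cross terms again being absorbed by the smallness of $\eps_0$ and the decay of $\pa_x^k[n^r,u^r]$. Throughout, $\|\pa_x^2\widetilde{\phi}\|$ and $\|\pa_x^3\widetilde{\phi}\|$ are not estimated independently but read off from \eqref{2tphy} and its $x$-derivative, so that $\|\pa_x\widetilde{\phi}\|_{H^1}$ is dominated by $\|\widetilde{n}_i-\widetilde{n}_e\|$ plus the $O(\eps)$ profile error. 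Combining the zero-order identity (with the trouble term now absorbed) and the higher-order identities with a suitably large constant, integrating in time and invoking Gronwall's inequality together with the time-decay rates of Lemma~\ref{cl.Re.Re2.}, yields \eqref{2total.eng1.}, the seemingly non-integrable profile contributions collapsing to the bound $\eps^{\sigma/(1+\sigma)}$ exactly as in \eqref{lrt3.5}. The large-time behavior \eqref{2asp}, \eqref{2asp-ad} then follows from the uniform bounds as in the proof of Theorem~\ref{main.res.}.
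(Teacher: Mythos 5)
Your architecture matches the paper's proof: the same three-step energy scheme, and your key move — substituting the weighted difference of the momentum equations \eqref{2tui}, \eqref{2tue} for one factor of $\pa_x\widetilde{\phi}$ in the trouble term $\tfrac12(\pa_xu^r,(\pa_x\widetilde{\phi})^2)$ — is algebraically identical to the paper's testing of \eqref{2tui}, \eqref{2tue} against $\pm\pa_x\widetilde{\phi}\,\pa_xu^r$ in \eqref{2de.el}, including the resulting $\frac{d}{dt}$ cross terms. However, there is a genuine gap at the heart of this step. After you integrate the acceleration terms by parts in $t$ and replace $\pa_t\pa_x\widetilde{\phi}$ via \eqref{2tphy} by $n_e\widetilde{u}_e-n_i\widetilde{u}_i+(\widetilde{n}_e-\widetilde{n}_i)u^r-\pa_t\pa_x\phi^r$, the dominant remainder is
\begin{equation*}
\frac{1}{2(\be+\ga)}\bigl(\be m_i\widetilde{u}_i-\ga m_e\widetilde{u}_e,\,(n_e\widetilde{u}_e-n_i\widetilde{u}_i)\pa_xu^r\bigr),
\end{equation*}
which is quadratic in the velocities with a single factor $\pa_xu^r$ — \emph{exactly} the size of the good term $I_0=\sum_{\al}m_\al(\widetilde{u}_\al^2,n_\al\pa_xu^r)$. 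Your claimed mechanism, "the bound $\pa_xu^r\lesssim\min\{\delta_r\eps,(1+t)^{-1}\}$ guarantees that all returning coefficients are small," fails precisely here: estimating this term by $C(1+t)^{-1}\|[\widetilde{u}_i,\widetilde{u}_e]\|^2$ or $C\eps\|[\widetilde{u}_i,\widetilde{u}_e]\|^2$ leaves a Gronwall coefficient that is not time-integrable (there is no dissipation of $\|\widetilde{u}_\al\|^2$ itself, so one gets $e^{C\eps T}$ or polynomial growth), while absorbing it into $I_0$ by Cauchy--Schwarz has no margin, since both carry the same weight $\pa_xu^r$. What closes the argument in the paper is a sign condition, not smallness: with the choice $\be m_i=\ga m_e$, this remainder combined with \emph{half} of $I_0$ becomes, at leading order $n_\al\approx n^r$, the nonnegative quadratic $\frac{n^r}{2(\be+\ga)}(\sqrt{\ga m_i}\,\widetilde{u}_i+\sqrt{\be m_e}\,\widetilde{u}_e)^2\pa_xu^r$ as in \eqref{2I0+I6}; by AM--GM the discriminant condition $(\be m_i+\ga m_e)^2\le 4\be\ga m_im_e$ holds \emph{only} at the borderline $\be m_i=\ga m_e$, so the positivity check is unavoidable and cannot be replaced by a crude absorption. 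You do name the right weights ("chosen to balance the mass coefficients"), but without the perfect-square computation the step as you justify it would not close.

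A secondary inaccuracy: you propose to read $\|\pa_x^2\widetilde{\phi}\|$ off from \eqref{2tphy} rather than estimate it independently. That works for the sup-in-time bound, but the time-integrated dissipation $\int_0^T\|\pa_x^2\widetilde{\phi}\|^2dt$ in \eqref{2total.eng1.} cannot be obtained this way, since it would require $\int_0^T\|\widetilde{n}_i-\widetilde{n}_e\|^2dt$, and only $\|\pa_x\widetilde{n}_\al\|^2$ — not $\|\widetilde{n}_\al\|^2$ — is in the dissipation budget. The paper instead produces this dissipation in Step 2 by pairing the $x$-derivative \eqref{2d.phi} of the Poisson equation with $-\pa_x\widetilde{\phi}$, where $(\pa_x^3\widetilde{\phi},-\pa_x\widetilde{\phi})=\|\pa_x^2\widetilde{\phi}\|^2$ after integration by parts, and exploiting the exact cancellation $(\pa_x(\widetilde{n}_i-\widetilde{n}_e),-\pa_x\widetilde{\phi})+(\pa_x\widetilde{\phi},\pa_x\widetilde{n}_i)-(\pa_x\widetilde{\phi},\pa_x\widetilde{n}_e)=0$ with the electric terms of the momentum equations tested against $\pa_x\widetilde{n}_\al$, as in \eqref{2ztrho.sum1}. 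You would need to incorporate this cancellation explicitly in your Step 2 for the stated dissipation to appear.
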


\subsection{A priori estimate}

\begin{proof}[Proof of Proposition \ref{2a.pri.}]
As for obtaining Proposition \ref{a.pri.}, we divide it by three steps as follows.

\medskip
\noindent{\bf Step 1.} {\it Zero-order energy estimates.}

As in \eqref{def.psi}, set
\begin{equation*}
\psi_\al=\psi(n_{\al},n^r)=T_\al\int_{n^r}^{n_{\al}}\frac{s-n^r}{s^2}ds,\quad \al=i,e.
\end{equation*}
Note that $\psi(n_{\al},n^r)$ is equivalent with  $|n_\al-n^r|^2$ for $|n_\al-n^r|\leq C$.
Multiplying  the first equation of \eqref{NSP2} and \eqref{2tui} by $\psi_i=\psi(n_i,n^r)$ and $n_i\widetilde{u}_i$ respectively,
 integrating the resulting equations with  respect to $x$ over $\R$ and taking the summation, one obtains
\begin{equation}\label{2zeui}
\frac{m_i}{2}\frac{d}{dt} (\widetilde{u}_i, n_i\widetilde{u}_i)+\frac{d}{dt} (n_i, \psi_i)
+m_i(\widetilde{u}_i^2,n_i\pa_xu^r)
-(\pa_x\widetilde{\phi}, n_i\widetilde{u}_i)=(\pa_x^2\widetilde{u}_i,\widetilde{u}_i)+(\pa_x^2u^r,\widetilde{u}_i),
\end{equation}
where we have used the following identities
$$
-(n_i,\pa_t\psi_i)+(\pa_x(n_i u_i),\psi_i)+T_i\left(\pa_x\left(\ln n_i-\ln n^r\right),n_i\widetilde{u}_i\right)=0,
$$
and
\begin{multline*}
-\frac{1}{2}(\widetilde{u}_i^2, \pa_tn_i)+(u_i\pa_x u_i-u^r\pa_x u^r,n_i\widetilde{u}_i)\\
=\frac{1}{2}(\widetilde{u}_i^2,\pa_x(n_i\widetilde{u}_i))+\frac{1}{2}(\widetilde{u}_i^2,\pa_x(n_i u^r))
+(\widetilde{u}_i\pa_x\widetilde{u}_i+u^r\pa_x\widetilde{u}_i+\widetilde{u}_i\pa_xu^r,n_i\widetilde{u}_i)
=(\widetilde{u}_i^2,n_i\pa_xu^r).
\end{multline*}
In a similar way, from the third equation of $\eqref{NSP2}$ and \eqref{2tue}, we have
\begin{equation}\label{2zeue}
\frac{m_e}{2}\frac{d}{dt} (\widetilde{u}_e, n_e\widetilde{u}_e)+\frac{d}{dt} (n_e, \psi_e)
+m_e(\widetilde{u}_e^2,n_e\pa_xu^r)
+(\pa_x\widetilde{\phi}, n_e\widetilde{u}_e)=(\pa_x^2\widetilde{u}_e,\widetilde{u}_e)+(\pa_x^2u^r,\widetilde{u}_e).
\end{equation}
We note that since $\pa_x u^r>0$ and $n_\al>0$,  $(\widetilde{u}_\al^2,n_\al\pa_xu^r)$ are good terms  for $\al=i$ and $e$, which plays a crucial role in the zero-order energy estimates.
Furthermore, we get from \eqref{2tni}, \eqref{2tne} and \eqref{2tphy} that
\begin{eqnarray*}
&&-(\pa_x\widetilde{\phi}, n_i\widetilde{u}_i)+(\pa_x\widetilde{\phi}, n_e\widetilde{u}_e)\\
&&=
(\widetilde{\phi}, \pa_x(n_i\widetilde{u}_i))-(\widetilde{\phi}, \pa_x(n_e\widetilde{u}_e))\\
&&=-(\widetilde{\phi}, \pa_t(\widetilde{n}_i-\widetilde{n}_e))-(\widetilde{\phi}, \pa_x((\widetilde{n}_i-\widetilde{n}_e)u^r))
\\
&&=-(\widetilde{\phi}, \pa_t\pa_x^2\widetilde{\phi})-(\widetilde{\phi}, \pa_t\pa_x^2\phi^r)
-(\widetilde{\phi}, \pa_x(\pa_x^2\widetilde{\phi}u^r))-(\widetilde{\phi}, \pa_x(\pa_x^2\phi^ru^r))\\
&&=\frac{1}{2}\frac{d}{dt}(\pa_x\widetilde{\phi},\pa_x\widetilde{\phi})+(\pa_x\widetilde{\phi}, \pa_t\pa_x\phi^r)
-\frac{1}{2}((\pa_x\widetilde{\phi})^2, \pa_xu^r)+(\pa_x\widetilde{\phi}, \pa_x^2\phi^ru^r).
\end{eqnarray*}
Therefore, with the above identity, the summation of \eqref{2zeui} and \eqref{2zeue} implies
\begin{equation}\label{2zeui+ue}
\begin{split}
\frac{d}{dt} &\left\{\frac{m_i}{2}(\widetilde{u}_i, n_i\widetilde{u}_i)+\frac{m_e}{2}(\widetilde{u}_e, n_e\widetilde{u}_e)\right\}
+\frac{d}{dt}\left\{(n_i, \psi_i)+(n_e, \psi_e)\right\}+\frac{1}{2}\frac{d}{dt}(\pa_x\widetilde{\phi},\pa_x\widetilde{\phi})\\
&\underbrace{+m_i(\widetilde{u}_i^2,n_i\pa_xu^r)+m_e(\widetilde{u}_e^2,n_e\pa_xu^r)}_{I_0}
+\|\pa_x[\widetilde{u}_i,\widetilde{u}_e]\|^2
\\=&\underbrace{\frac{1}{2}((\pa_x\widetilde{\phi})^2, \pa_xu^r)}_{I_1}
+\underbrace{(\pa_x^2u^r,\widetilde{u}_i)}_{I_2}+\underbrace{(\pa_x^2u^r,\widetilde{u}_e)}_{I_3}
\underbrace{-(\pa_x\widetilde{\phi}, \pa_t\pa_x\phi^r)}_{I_4}
\underbrace{-(\pa_x\widetilde{\phi}, \pa_x^2\phi^ru^r)}_{I_5}.
\end{split}
\end{equation}

We have to develop a little more delicate estimates on $I_1$, which are different from the case of the one-fluid ions considered in the previous section. The basic idea is to make use of the good term $I_0$ to absorb $I_1$,
since the latter one
can not be directly controlled.
For this, we choose two positive constants $\be$ and $\ga$ such that $m_i\be=m_e \ga$. Notice that physically the ratio $\ga/\be=m_i/m_e$ is large. Taking the inner product of \eqref{2tui} and \eqref{2tue} with
$$
\frac{\be}{2(\be+\ga)}\pa_x\widetilde{\phi}\pa_xu^r\ \text{and}\ -\frac{\ga}{2(\be+\ga)}\pa_x\widetilde{\phi}\pa_xu^r,
$$
respectively, and taking the summation of the resulting equations, one has
\begin{equation}\label{2de.el}
\begin{split}
-\frac{1}{2}&((\pa_x\widetilde{\phi})^2, \pa_xu^r)
\\=&-\frac{\be m_i}{2(\be+\ga)}\frac{d}{dt}(\widetilde{u}_i,\pa_x\widetilde{\phi}\pa_xu^r)
+\frac{\ga m_e}{2(\be+\ga)}\frac{d}{dt}(\widetilde{u}_e,\pa_x\widetilde{\phi}\pa_xu^r)
\\&+\underbrace{\frac{1}{2(\be+\ga)}(\be m_i\widetilde{u}_i-\ga m_e\widetilde{u}_e,\pa_t\pa_x\widetilde{\phi}\pa_xu^r)}_{I_6}
+\underbrace{\frac{1}{2(\be+\ga)}(\be m_i\widetilde{u}_i-\ga m_e\widetilde{u}_e,\pa_x\widetilde{\phi}\pa_t\pa_xu^r)}_{I_7}
\\&\underbrace{-\frac{1}{2(\be+\ga)}\left(\be m_i[u_i\pa_x u_i-u^r\pa_x u^r]-\ga m_e[u_e\pa_x u_e-u^r\pa_x u^r],\pa_x\widetilde{\phi}\pa_xu^r\right)}_{I_8}
\\&\underbrace{-\frac{1}{2(\be+\ga)}\left(\be T_i\pa_x\ln \frac{n_i}{n^r}-\gamma T_e\pa_x\ln \frac{n_e}{n^r},\pa_x\widetilde{\phi}\pa_xu^r\right)}_{I_9}+\underbrace{\frac{1}{2(\be+\ga)}\left(\be\frac{\pa^2_xu_i}{n_i}-\ga \frac{\pa^2_xu_e}{n_e},\pa_x\widetilde{\phi}\pa_xu^r\right)}_{I_{10}}.
\end{split}
\end{equation}
On the other hand, it follows from \eqref{2tphy} that
\begin{equation*}
\pa_t\pa_x\widetilde{\phi}=n_eu_e-n_iu_i-\pa_t\pa_x\phi^r=n_e\widetilde{u}_e-n_i\widetilde{u}_i
+(\widetilde{n}_e-\widetilde{n}_i)u^r{-\pa_t\pa_x\phi^r},
\end{equation*}
which implies
\begin{equation*}
\begin{split}
I_6=&-\frac{\be m_i}{2(\be+\ga)}((\widetilde{u}_i)^2,n_i\pa_xu^r)-\frac{\ga m_e}{2(\be+\ga)}((\widetilde{u}_e)^2,n_e\pa_xu^r)\\
&+\frac{\be m_i}{2(\be+\ga)}(\widetilde{u}_i\widetilde{u}_e,n_e\pa_xu^r)
+\frac{\ga m_e}{2(\be+\ga)}(\widetilde{u}_i\widetilde{u}_e,n_i\pa_xu^r)
\\&{-\frac{1}{2(\be+\ga)}(\be m_i\widetilde{u}_i-\ga m_e\widetilde{u}_e,\pa_t\pa_x\phi^r\pa_xu^r)}.
\end{split}
\end{equation*}
Consequently, it holds that
\begin{equation}\label{2I0+I6}
\begin{split}
\frac{I_0}{2}+I_6
=&\frac{1}{2(\be+\ga)}\int_{\R}\bigg\{\ga m_in_i(\widetilde{u}_i)^2
+(\be m_in_e+\ga m_en_i)\widetilde{u}_i\widetilde{u}_e
+\be m_en_e(\widetilde{u}_e)^2\bigg\}\pa_xu^r dx
\\=&\underbrace{\frac{1}{2(\be+\ga)}\int_{\R}\bigg\{\ga m_i\widetilde{n}_i(\widetilde{u}_i)^2
+(\be m_i\widetilde{n}_e+\ga m_e\widetilde{n}_i)\widetilde{u}_i\widetilde{u}_e
+\be m_e\widetilde{n}_e(\widetilde{u}_e)^2\bigg\}\pa_xu^r dx}_{I_{11}}
\\&+\underbrace{\frac{n^r}{2(\be+\ga)}\int_{\R}\bigg\{\ga m_i(\widetilde{u}_i)^2
+(\be m_i+\ga m_e)\widetilde{u}_i\widetilde{u}_e
+\be m_e(\widetilde{u}_e)^2\bigg\}\pa_xu^r dx}_{I_{12}}
\\&{\underbrace{-\frac{1}{2(\be+\ga)}\int_{\R}(\be m_i\widetilde{u}_i-\ga m_e\widetilde{u}_e)\pa_t\pa_x\phi^r\pa_xu^rdx}_{I_{13}}}.
\end{split}
\end{equation}
One can claim that $I_{12}\geq0$ due to the choice of constants $\be$ and $\ga$. Indeed, by using $\be m_i=\ga m_e$, the integrand of $I_{12}$ can be rewritten as
$$
(\sqrt{\ga m_i}\widetilde{u}_i+\sqrt{\be m_e}\widetilde{u}_e)^2\pa_xu^r,
$$
which is always non-negative for $\be>0$ and $\ga>0$.
With \eqref{2I0+I6} in hand, substituting \eqref{2de.el} into \eqref{2zeui+ue}, we arrive at
\begin{equation}\label{2zeui+ue2}
\begin{split}
\frac{d}{dt} &\left\{\frac{m_i}{2}(\widetilde{u}_i, n_i\widetilde{u}_i)+\frac{m_e}{2}(\widetilde{u}_e, n_e\widetilde{u}_e)\right\}
+\frac{d}{dt}\left\{(n_i, \psi_i)+(n_e, \psi_e)\right\}+\frac{1}{2}\frac{d}{dt}(\pa_x\widetilde{\phi},\pa_x\widetilde{\phi})
\\&-\frac{\be m_i}{2(\be+\ga)}\frac{d}{dt}(\widetilde{u}_i,\pa_x\widetilde{\phi}\pa_xu^r)
+\frac{\ga m_e}{2(\be+\ga)}\frac{d}{dt}(\widetilde{u}_e,\pa_x\widetilde{\phi}\pa_xu^r)\\
&+\frac{1}{2}\left\{m_i(\widetilde{u}_i^2,n_i\pa_xu^r)+m_e(\widetilde{u}_e^2,n_e\pa_xu^r)\right\}
+\|\pa_x[\widetilde{u}_i,\widetilde{u}_e]\|^2+I_{12}
\\=&I_2+I_3+I_4+I_5-I_7-I_8-I_9-I_{10}-I_{11}{-I_{13}}.
\end{split}
\end{equation}
We now turn to estimate the right hand side of \eqref{2zeui+ue2} term by term. Letting $0<\sigma<1$, by applying H\"older's inequality, Young's inequality and Lemma \ref{cl.Re.Re2.},  as for obtaining \eqref{lrt3.5}, one obtains
\begin{equation*}
\begin{split}
|I_2|+|I_3|
\lesssim&\eps^{\frac{\sigma}{1+\sigma}}\left\{\|\pa_x[\widetilde{u}_i,\widetilde{u}_e]\|^2+(1+t)^{-\frac{5+3\sigma}{4+4\sigma}}
+(1+t)^{-\frac{3+5\sigma}{2(1+3\sigma)}}\|[\widetilde{u}_i,\widetilde{u}_e]\|^{2}\right\}.
\end{split}
\end{equation*}
In the completely same way, it follows that
\begin{equation*}
\begin{split}
|I_4|+|I_5|
\lesssim\eps^{\frac{\sigma}{1+\sigma}}\left\{\|\pa^2_x\widetilde{\phi}\|^2+(1+t)^{-\frac{5+3\sigma}{4+4\sigma}}
+(1+t)^{-\frac{3+5\sigma}{2(1+3\sigma)}}\|\pa_x\widetilde{\phi}\|^{2}\right\}.
\end{split}
\end{equation*}
By integration by parts and employing Lemma \ref{cl.Re.Re2.} and Cauchy-Schwarz's inequality with $0<\eta<1$, we have
\begin{equation*}
\begin{split}
|I_7|
\lesssim\eta\|\pa_x[\widetilde{u}_i,\widetilde{u}_e,\pa_x\widetilde{\phi}]\|^2
+C_\eta(1+t)^{-2}\|[\widetilde{u}_i,\widetilde{u}_e,\pa_x\widetilde{\phi}]\|^{2}.
\end{split}
\end{equation*}
From Lemma \ref{cl.Re.Re2.} and Sobolev's inequality \eqref{sob.ine.} as well as Cauchy-Schwarz's inequality with $0<\eta<1$, it holds that
\begin{equation*}
\begin{split}
|I_8|+|I_9|
\lesssim\eta\|\pa_x[\widetilde{n}_i,\widetilde{n}_e,\widetilde{u}_i,\widetilde{u}_e]\|^2
+\eps_0\|\pa_x[\widetilde{u}_i,\widetilde{u}_e]\|^2
+C_\eta(1+t)^{-2}\|[\widetilde{n}_i,\widetilde{n}_e,\widetilde{u}_i,\widetilde{u}_e,\pa_x\widetilde{\phi}]\|^{2},
\end{split}
\end{equation*}
and
\begin{equation*}
\begin{split}
|I_{10}|
\lesssim(\eta+\eps)\|\pa_x[\widetilde{u}_i,\widetilde{u}_e]\|^2+\eps\|\pa^2_x\widetilde{\phi}\|^{2}
+C_\eta(1+t)^{-2}\|\pa_x\widetilde{\phi}\|^{2}+\eps C_\eta(1+t)^{-2}.
\end{split}
\end{equation*}
For the term $I_{11}$, it follows from the
Sobolev's inequality \eqref{sob.ine.} that
\begin{equation*}
\begin{split}
|I_{11}|
\lesssim\eps_0\left\{m_i(\widetilde{u}_i^2,n_i\pa_xu^r)+m_e(\widetilde{u}_e^2,n_e\pa_xu^r)\right\}.
\end{split}
\end{equation*}
As to the last term $I_{13}$, we get from Lemma \ref{cl.Re.Re2.} and H\"older's inequality that
$$
|I_{13}|
\lesssim (1+t)^{-2}\|[\widetilde{u}_i,\widetilde{u}_e]\|^{2}+\eps(1+t)^{-2}.
$$
Therefore, by plugging the above estimates 
into \eqref{2zeui+ue2} and letting $\eps>0, \eps_0>0$ and $0<\eta<1$ be suitably small, we have
\begin{equation}\label{2zeui+ue3}
\begin{split}
\frac{d}{dt} &\left\{\frac{m_i}{2}(\widetilde{u}_i, n_i\widetilde{u}_i)+\frac{m_e}{2}(\widetilde{u}_e, n_e\widetilde{u}_e)\right\}
+\frac{d}{dt}\left\{(n_i, \psi_i)+(n_e, \psi_e)\right\}+\frac{1}{2}\frac{d}{dt}(\pa_x\widetilde{\phi},\pa_x\widetilde{\phi})
\\&-\frac{\be m_i}{2(\be+\ga)}\frac{d}{dt}(\widetilde{u}_i,\pa_x\widetilde{\phi}\pa_xu^r)
+\frac{\ga m_e}{2(\be+\ga)}\frac{d}{dt}(\widetilde{u}_e,\pa_x\widetilde{\phi}\pa_xu^r)\\
&+\la\left\{m_i(\widetilde{u}_i^2,n_i\pa_xu^r)+m_e(\widetilde{u}_e^2,n_e\pa_xu^r)\right\}
+\la\|\pa_x[\widetilde{u}_i,\widetilde{u}_e]\|^2
\\ \lesssim &\eta\|\pa_x[\widetilde{n}_i,\widetilde{n}_e]\|^2+\eps^{\frac{\sigma}{1+\sigma}}\|\pa^2_x\widetilde{\phi}\|^2
+C_\eta (1+t)^{-2}\|[\widetilde{n}_i,\widetilde{n}_e,\widetilde{u}_i,\widetilde{u}_e,\pa_x\widetilde{\phi}]\|^{2}
\\&+\eps^{\frac{\sigma}{1+\sigma}}(1+t)^{-\frac{3+5\sigma}{2(1+3\sigma)}}\|[\widetilde{u}_i,\widetilde{u}_e]\|^{2}
+\eps^{\frac{\sigma}{1+\sigma}}(1+t)^{-\frac{5+3\sigma}{4+4\sigma}}
+\eps C_\eta (1+t)^{-2}.
\end{split}
\end{equation}

\medskip
\noindent{\bf Step 2.} {\it Dissipation of $\pa_x[\widetilde{n}_i,\widetilde{n}_e]$ and $\pa^2_x\widetilde{\phi}$.}

We first differentiate \eqref{2tphy}, \eqref{2tni} and \eqref{2tne} with respect to $x$ to obtain
\begin{equation}\label{2d.phi}
\pa_x^3\widetilde{\phi}=\pa_x(\widetilde{n}_i-\widetilde{n}_e)-\pa_x^3\phi^r,
\end{equation}
\begin{equation}\label{2d.ni}
\pa_t\pa_x\widetilde{n}_i+\pa_x u_i\pa_x\widetilde{n}_i+u_i\pa^2_x\widetilde{n}_i+\pa_x n_i\pa_x\widetilde{u}_i
+n_i\pa^2_x\widetilde{u}_i+\widetilde{n}_i\pa^2_xu^r
+\pa_x\widetilde{n}_i\pa_xu^r+\widetilde{u}_i\pa^2_xn^r
+\pa_xn^r\pa_x\widetilde{u}_i=0,
\end{equation}
and
\begin{equation}\label{2d.ne}
\pa_t\pa_x\widetilde{n}_e+\pa_x u_e\pa_x\widetilde{n}_e+u_e\pa^2_x\widetilde{n}_e+\pa_x n_e\pa_x\widetilde{u}_e
+n_e\pa^2_x\widetilde{u}_e+\widetilde{n}_e\pa^2_xu^r
+\pa_x\widetilde{n}_e\pa_xu^r+\widetilde{u}_e\pa^2_xn^r
+\pa_xn^r\pa_x\widetilde{u}_e=0.
\end{equation}
By taking the inner products of \eqref{2d.phi}, \eqref{2d.ni}, \eqref{2d.ne}, \eqref{2tui} and \eqref{2tue} with $-\pa_x\widetilde{\phi}$, $\frac{\pa_x\widetilde{n}_i}{n_i^2}$
, $\frac{\pa_x\widetilde{n}_e}{n_e^2}$, $\pa_x\widetilde{n}_i$ and $\pa_x\widetilde{n}_e$ with  respect to $x$ over $\R$, respectively, taking the summation of resulting five identities, and noticing the cancelation
$$
(\pa_x(\widetilde{n}_i-\widetilde{n}_e),-\pa_x\widetilde{\phi}) +(\pa_x\widetilde{\phi}, \pa_x\widetilde{n}_i)+(-\pa_x\widetilde{\phi}, \pa_x\widetilde{n}_e)=0,
$$
one has
\begin{equation}\label{2ztrho.sum1}
\begin{split}
\frac{d}{dt} &\sum\limits_{\al=i,e}m_\al(\widetilde{u}_\al, \pa_x\widetilde{n}_\al)+\frac{1}{2}\frac{d}{dt}\sum\limits_{\al=i,e} ((\pa_x\widetilde{n}_\al)^2, n_\al^{-2})+\sum\limits_{\al=i,e}\left(\pa_xu^r, (\pa_x\widetilde{n}_\al)^2n_\al^{-2}\right)\\
&+\sum\limits_{\al=i,e}T_\al\left(n_\al^{-1}, (\pa_x\widetilde{n}_\al)^2\right)
+(\pa^2_x\widetilde{\phi},\pa^2_x\widetilde{\phi})\\
= &(\pa_x^3\phi^r,\pa_x\widetilde{\phi})
-\sum\limits_{\al=i,e}\left((\pa_x\widetilde{n}_\al)^2, n_\al^{-3}\pa_tn_\al\right)\\
&-\sum\limits_{\al=i,e}(\pa_x u_\al\pa_x\widetilde{n}_\al+u_\al\pa^2_x\widetilde{n}_\al+\pa_xn_\al\pa_x\widetilde{u}_\al +\widetilde{n}_\al\pa^2_xu^r+\widetilde{u}_\al\pa^2_xn^r
+\pa_xn^r\pa_x\widetilde{u}_\al, \frac{\pa_x\widetilde{n}_\al}{n_\al^2})\\
&+\sum\limits_{\al=i,e}m_\al\left\{(\widetilde{u}_\al, \pa_t \pa_x\widetilde{n}_{\al})-(u_{\al}\pa_x u_{\al}-u^r\pa_x u^r, \pa_x\widetilde{n}_{\al})\right\}
\\&+\sum\limits_{\al=i,e}T_{\al}\left((n_{\al}n^r)^{-1}\widetilde{n}_{\al}\pa_xn^r, \pa_x\widetilde{n}_{\al}\right)+\sum\limits_{\al=i,e}\left(\pa_x^2u^r,\frac{\pa_x\widetilde{n}_{\al}}{n_{\al}}\right),
\end{split}
\end{equation}
which is further equal to
\begin{equation*}
\begin{split}
&\underbrace{(\pa_x^3\phi^r,\pa_x\widetilde{\phi})}_{J_1}
+\underbrace{\frac{1}{2}\sum\limits_{\al=i,e}\left(\pa_x u_\al\pa_x\widetilde{n}_\al,\frac{\pa_x\widetilde{n}_\al}{n_\al^2}\right)}_{J_{2}}\\
&-\underbrace{\sum\limits_{\al=i,e}\left(\pa_xn_\al\pa_x\widetilde{u}_\al
+\widetilde{n}_\al\pa^2_xu^r+\widetilde{u}_\al\pa^2_xn^r
+\pa_xn^r\pa_x\widetilde{u}_\al, \frac{\pa_x\widetilde{n}_\al}{n_\al^2}\right)}_{J_{3}}\\
&\underbrace{+\sum\limits_{\al=i,e}m_\al\left\{(\widetilde{u}_\al, \pa_t \pa_x\widetilde{n}_{\al})-(u_{\al}\pa_x u_{\al}-u^r\pa_x u^r, \pa_x\widetilde{n}_{\al})\right\}}_{J_4}
\\&\underbrace{+\sum\limits_{\al=i,e}T_{\al}\left((n_{\al}n^r)^{-1}\widetilde{n}_{\al}\pa_xn^r, \pa_x\widetilde{n}_{\al}\right)}_{J_5}+\underbrace{\sum\limits_{\al=i,e}\left(\pa_x^2u^r,\frac{\pa_x\widetilde{n}_{\al}}{n_{\al}}\right)}_{J_6},
\end{split}
\end{equation*}
where the following fact
$$
-\left((\pa_x\widetilde{n}_\al)^2, n_\al^{-3}\pa_tn_\al\right)
-\left(\pa_x u_\al\pa_x\widetilde{n}_\al+u_\al\pa^2_x\widetilde{n}_\al, \frac{\pa_x\widetilde{n}_\al}{n_\al^2}\right)=
\frac{1}{2}\left(\pa_x u_\al\pa_x\widetilde{n}_\al,\frac{\pa_x\widetilde{n}_\al}{n_\al^2}\right)
$$
has been used to deduce $J_{2}$. It is straightforward to see that
all terms $J_l$ for $1\leq l\leq 6$ can be estimated as for $I_{l}$ for $10\leq l\leq 15$ on the right hand side of \eqref{ztrho.sum1}.
Then, by applying those estimates to \eqref{2ztrho.sum1}  and letting $0<\eta<1$ and $\eps_0>0$ be suitably small, we obtain
\begin{eqnarray}
&&\frac{d}{dt} \sum\limits_{\al=i,e}m_\al(\widetilde{u}_\al, \pa_x\widetilde{n}_\al)
+\frac{1}{2}\frac{d}{dt} \sum\limits_{\al=i,e}((\pa_x\widetilde{n}_\al)^2, n_\al^{-2})+\sum_{\al=i,e}\left(\pa_xu^r, (\pa_x\widetilde{n}_\al)^2n_\al^{-2}\right)\notag\\
&&\quad +\la\sum\limits_{\al=i,e}\left(n_\al^{-1}, (\pa_x\widetilde{n}_\al)^2\right)
+\la(\pa^2_x\widetilde{\phi},\pa^2_x\widetilde{\phi})\notag\\
&&\leq \max\{1,n_+\}\sum\limits_{\al=i,e}m_\al\|\pa_x\widetilde{u}_\al\|^2 +C\eps_0^2\sum\limits_{\al=i,e}\|\pa_x^2\widetilde{u}_\al\|^2\notag\\
&&\quad+ C(1+t)^{-2} \sum\limits_{\al=i,e}\|[\widetilde{n}_\al,\widetilde{u}_\al]\|^2+C\eps (1+t)^{-2}.\label{2s2.f}
\end{eqnarray}
Further taking the summation of $2\times$\eqref{2s2.f} and ${2C_2}\times$\eqref{2zeui+ue3} for a suitably large constant ${C_2}>1$ and also letting $\eps_0>0$ and $0<\eta<1$ in \eqref{2ztrho.sum1} be suitably small gives
\begin{equation}\label{2ztru.sum1}
\begin{split}
\frac{d}{dt} &\sum\limits_{\al=i,e}\left\{2m_\al(\widetilde{u}_\al, \pa_x\widetilde{n}_\al)+((\pa_x\widetilde{n}_\al)^2, n_\al^{-2})\right\}
+\frac{d}{dt} {C_2}\sum\limits_{\al=i,e}\left\{m_\al(\widetilde{u}_\al, n_\al\widetilde{u}_\al)
+2(n_\al, \psi_\al)\right\}\\&
+{C_2}\frac{d}{dt}(\pa_x\widetilde{\phi},\pa_x\widetilde{\phi})
-\frac{C_2\be m_i}{\be+\ga}\frac{d}{dt}(\widetilde{u}_i,\pa_x\widetilde{\phi}\pa_xu^r)
+\frac{C_2\ga m_e}{\be+\ga}\frac{d}{dt}(\widetilde{u}_e,\pa_x\widetilde{\phi}\pa_xu^r)\\
&
+\la(\pa^2_x\widetilde{\phi},\pa^2_x\widetilde{\phi})+\la\sum\limits_{\al=i,e}\left\{m_\al(\widetilde{u}_\al^2,n_\al\pa_xu^r)+\left(\pa_xu^r, (\pa_x\widetilde{n}_\al)^2n_\al^{-2}\right)\right\}
\\&+\la\sum\limits_{\al=i,e}\left\{(\pa_x\widetilde{u}_\al,\pa_x\widetilde{u}_\al)+\left(n_\al^{-1}, (\pa_x\widetilde{n}_\al)^2\right)\right\}
\\
\lesssim & \eps_0\sum\limits_{\al=i,e}\|\pa_x^2\widetilde{u}_\al\|^{2}
+(1+t)^{-2}\sum\limits_{\al=i,e}\|[\widetilde{n}_\al,\widetilde{u}_\al,\pa_x\widetilde{\phi}]\|^2
+(1+t)^{-\frac{3+5\sigma}{2(1+3\sigma)}}\sum\limits_{\al=i,e}\|\widetilde{u}_\al\|^{2}\\
&+\eps^{\frac{\sigma}{1+\sigma}}(1+t)^{-\frac{5+3\sigma}{4+4\sigma}}
+\eps(1+t)^{-2}.
\end{split}
\end{equation}
Here, as $\|\pa_x u^r\|_{L^\infty}\leq C\delta_r\eps$, in terms of  Cauchy-Schwarz's inequality, one can take $C_2>0$ large enough and $\eps>0$ small enough such that
\begin{multline}\label{re-ad1}
2m_\al(\widetilde{u}_\al, \pa_x\widetilde{n}_\al)+((\pa_x\widetilde{n}_\al)^2, n_\al^{-2})
+{C_2}m_\al(\widetilde{u}_\al, n_\al\widetilde{u}_\al)
+{C_2}(\pa_x\widetilde{\phi},\pa_x\widetilde{\phi})\\
-\frac{C_2\be m_i}{\be+\ga}(\widetilde{u}_i,\pa_x\widetilde{\phi}\pa_xu^r)
+\frac{C_2\ga m_e}{\be+\ga}(\widetilde{u}_e,\pa_x\widetilde{\phi}\pa_xu^r)
\thicksim \|\pa_x\widetilde{n}_\al\|^2+\|\widetilde{u}_\al\|^2+\|\pa_x\widetilde{\phi}\|^2.
\end{multline}
{By applying \eqref{2ztru.sum1} and Gronwall's inequality, we conclude}
\begin{equation}\label{2ztru.sum2}
\begin{split}
\sup\limits_{0\leq t\leq T} &\Big\{\sum\limits_{\al=i,e}\left\{(2m_\al(\widetilde{u}_\al, \pa_x\widetilde{n}_\al)+((\pa_x\widetilde{n}_\al)^2, n_\al^{-2})+{C_2}m_\al(\widetilde{u}_\al, n_\al\widetilde{u}_\al)
+2{C_2}(n_\al, \psi_\al)\right\}\\&\qquad\qquad+{C_2}(\pa_x\widetilde{\phi}, \pa_x\widetilde{\phi})
-\frac{C_2\be m_i}{\be+\ga}(\widetilde{u}_i,\pa_x\widetilde{\phi}\pa_xu^r)
+\frac{C_2\ga m_e}{\be+\ga}(\widetilde{u}_e,\pa_x\widetilde{\phi}\pa_xu^r)
\Big\}\\
&+\la\int_0^T\left\{\sum\limits_{\al=i,e}\left\{m_\al(\widetilde{u}_\al^2,n_\al\pa_xu^r)+\left(\pa_xu^r, (\pa_x\widetilde{n}_\al)^2n_\al^{-2}\right)\right\}\right\}dt
\\&+\la\int_0^T\left\{\sum\limits_{\al=i,e}\left\{(\pa_x\widetilde{u}_\al,\pa_x\widetilde{u}_\al)+\left(n_\al^{-1}, (\pa_x\widetilde{n}_\al)^2\right)\right\}+(\pa^2_x\widetilde{\phi},\pa^2_x\widetilde{\phi})\right\}dt
\\
\lesssim & \sum\limits_{\al=i,e}\left\{\|[\widetilde{n}_\al,\widetilde{u}_\al](0)\|^2+\|\pa_x \widetilde{n}_\al(0)\|^2\right\}
+\|\pa_x \widetilde{\phi}(0)\|^2+\eps_0\sum\limits_{\al=i,e}\int_0^T\|\pa^2_x\widetilde{u}_\al\|^2dt+\eps^{\frac{\sigma}{1+\sigma}}.
\end{split}
\end{equation}

\noindent{\bf Step 3.} {\it Dissipation of $\pa_x^2\widetilde{u}_\al$.}

By differentiating \eqref{2tui} and \eqref{2tue} in $x$, taking the inner products of the resulting identities with $\pa_x\widetilde{u}_i$
and $\pa_x\widetilde{u}_e$
with  respect to $x$ over $\R$, respectively, and taking the summation for $\al=i$ and $e$, we have
\begin{equation}\label{2d.ui-ue}
\begin{split}
\sum\limits_{\al=i,e}m_\al&(\pa_t \pa_x\widetilde{u}_\al,\pa_x\widetilde{u}_\al)+\sum\limits_{\al=i,e}m_\al(\pa_x(\widetilde{u}_\al\pa_x \widetilde{u}_\al+\widetilde{u}_\al\pa_x u^r+\pa_x \widetilde{u}_\al u^r), \pa_x\widetilde{u}_\al)\\&+\sum\limits_{\al=i,e}T_\al\left(\pa_x\left(\frac{\pa_xn_\al}{n_\al}-\frac{\pa_xn^r}{n^r}\right), \pa_x\widetilde{u}_\al\right)
+(\pa^2_x\widetilde{\phi}, -\pa_x\widetilde{u}_i+\pa_x\widetilde{u}_e)\\&+\sum\limits_{\al=i,e}\left(-\pa_x\left(\frac{\pa_x^2\widetilde{u}_\al}{n_\al}\right),\pa_x\widetilde{u}_\al\right)
+\sum\limits_{\al=i,e}\left(-\pa_x\left(\frac{\pa_x^2u^r}{n_\al}\right),\pa_x\widetilde{u}_\al\right)=0.
\end{split}
\end{equation}
It is straightforward to see that all terms in \eqref{2d.ui-ue} can be estimated in the completely same way as in \eqref{tu.uip}, so that we directly arrive at
\begin{multline}\label{2tu.uip.p1}
\frac{1}{2}\frac{d}{dt}\sum\limits_{\al=i,e}m_\al(\pa_x\widetilde{u}_\al,\pa_x\widetilde{u}_\al)+\frac{3}{2}\sum\limits_{\al=i,e}m_\al(\pa_x u^r, (\pa_x\widetilde{u}_\al)^2)
+\la\sum\limits_{\al=i,e}\left(\frac{\pa_x^2\widetilde{u}_\al}{n_\al},\pa^2_x\widetilde{u}_\al\right)\\
\leq
C\eps(1+t)^{-2}+C\left(\sum\limits_{\al=i,e}\|\pa_x [\widetilde{n}_\al,\pa_x \widetilde{u}_\al]\|^2+\|\pa^2_x\widetilde{\phi}\|^2\right).
\end{multline}
Integrating \eqref{2tu.uip.p1} with respect to $t$ over $[0,T]$ and letting $0<\eta<1$ be suitably small, one further has
\begin{equation}\label{2tu.uip.p2}
\begin{split}
\sup\limits_{0\leq t\leq T}&\sum\limits_{\al=i,e}m_\al(\pa_x\widetilde{u}_\al,\pa_x\widetilde{u}_\al)
+\sum\limits_{\al=i,e}m_\al\int_0^T(\pa_x u^r, (\pa_x\widetilde{u}_\al)^2)dt
\\&+\la\sum\limits_{\al=i,e}\int_0^T\left(\frac{\pa_x^2\widetilde{u}_\al}{n_\al},\pa^2_x\widetilde{u}_\al\right)dt\\
\lesssim &\sum\limits_{\al=i,e}m_\al\|\pa_x \widetilde{u}_\al(0)\|^2
+C\sum\limits_{\al=i,e}\int_0^T(\|\pa_x [\widetilde{n}_\al,\pa_x \widetilde{u}_\al]\|^2+\|\pa^2_x\widetilde{\phi}\|^2)dt
+C\eps.
\end{split}
\end{equation}
Thus {we get from \eqref{2ztru.sum2} and \eqref{2tu.uip.p2} that}
\begin{equation}\label{2total.eng2.}
\begin{split}
\sup\limits_{0\leq t\leq T} &\Big\{\sum\limits_{\al=i,e}\left\{2m_\al(\widetilde{u}_\al, \pa_x\widetilde{n}_\al)+((\pa_x\widetilde{n}_\al)^2, n_\al^{-2})+{C_2}m_\al(\widetilde{u}_\al, n_\al\widetilde{u}_\al)
+2{C_2}(n_\al, \psi_\al)\right\}\\&\quad+\sum\limits_{\al=i,e}m_\al(\pa_x\widetilde{u}_\al,\pa_x\widetilde{u}_\al)+{C_2}(\pa_x\widetilde{\phi}, \pa_x\widetilde{\phi})
-\frac{C_2\be m_i}{\be+\ga}(\widetilde{u}_i,\pa_x\widetilde{\phi}\pa_xu^r)
+\frac{C_2\ga m_e}{\be+\ga}(\widetilde{u}_e,\pa_x\widetilde{\phi}\pa_xu^r)
\Big\}
\\&+\la\int_0^T\left\{\sum\limits_{\al=i,e}\left\{m_\al(\widetilde{u}_\al^2,n_\al\pa_xu^r)+\left(\pa_xu^r, (\pa_x\widetilde{n}_\al)^2n_\al^{-2}\right)\right\}\right\}dt
\\&+\la\int_0^T\left\{\sum\limits_{\al=i,e}\left\{(\pa_x\widetilde{u}_\al,\pa_x\widetilde{u}_\al)+\left(n_\al^{-1}, (\pa_x\widetilde{n}_\al)^2\right)\right\}+(\pa^2_x\widetilde{\phi},\pa^2_x\widetilde{\phi})\right\}dt
\\&+\la\int_0^T\left\{\sum\limits_{\al=i,e}\left\{m_\al((\pa_x\widetilde{u}_\al)^2,n_\al\pa_xu^r)+
\left(\frac{\pa_x^2\widetilde{u}_\al}{n_\al},\pa^2_x\widetilde{u}_\al\right)\right\}\right\}dt
\\
\lesssim & \sum\limits_{\al=i,e}\left\{\|[\widetilde{n}_\al,\widetilde{u}_\al](0)\|_{H^1}^2\right\}
+\|\pa_x \widetilde{\phi}(0)\|^2+\eps^{\frac{\sigma}{1+\sigma}}.
\end{split}
\end{equation}
Recalling \eqref{re-ad1}, notice that
\begin{multline*}
\sum\limits_{\al=i,e}\left\{(2m_\al\widetilde{u}_\al, \pa_x\widetilde{n}_\al)+((\pa_x\widetilde{n}_\al)^2, n_\al^{-2})+{C_2}m_\al(\widetilde{u}_\al, n_\al\widetilde{u}_\al)
+2{C_2}(n_\al, \psi_\al)\right\}\\
+\sum\limits_{\al=i,e}m_\al(\pa_x\widetilde{u}_\al,\pa_x\widetilde{u}_\al)+{C_2}(\pa_x\widetilde{\phi}, \pa_x\widetilde{\phi})
-\frac{C_2\be m_i}{\be+\ga}(\widetilde{u}_i,\pa_x\widetilde{\phi}\pa_xu^r)
+\frac{C_2\ga m_e}{\be+\ga}(\widetilde{u}_e,\pa_x\widetilde{\phi}\pa_xu^r)\\
\thicksim \sum_{\al=i,e}\left(\left\|\widetilde{n}_\al\right\|^2_{H^1}+\|\widetilde{u}_\al\|_{H^1}^2\right)+\|\pa_x\widetilde{\phi}\|^2,
\end{multline*}
and
\begin{equation*}
\begin{split}
(\pa_x\widetilde{u}_\al,\pa_x\widetilde{u}_\al)+\left(n_\al^{-1}, (\pa_x\widetilde{n}_\al)^2\right)+\left(\frac{\pa_x^2\widetilde{u}_\al}{n_\al},\pa^2_x\widetilde{u}_\al\right)
\thicksim& \left\|[\pa_x\widetilde{n}_\al,\pa_x\widetilde{u}_\al,\pa^2_x\widetilde{u}_\al]\right\|^2,
\end{split}
\end{equation*}
according to the a priori assumption \eqref{2pri.ass.} and the fact that $n_+>n_->0$.
{\eqref{2total.eng1.} then follows from the above observation and \eqref{2total.eng2.}}. Notice that the boundedness of $\|\pa_x^2\widetilde{\phi}(t)\|^2$ for all $t$ directly follows from the Poisson equation \eqref{2tphy}. This then completes the proof of Proposition \ref{2a.pri.}.
\end{proof}

We are now in a position to complete the

\begin{proof}[Proof of Theorem \ref{2main.res.}.]
The existence of the solution follows from the standard continuity argument based on the local existence and the a priori
estimate in Proposition \ref{2a.pri.}. Then \eqref{2total.eng.} holds true. The large-time behavior given in \eqref{2asp} and \eqref{2asp-ad} can be verified in terms of \eqref{2total.eng.}. This ends the proof of Theorem \ref{2main.res.}.
\end{proof}

\section{Appendix}

In this appendix, we present the dissipative structure (cf.~\cite{UDK} and references therein) of the linearized system corresponding to the one-fluid model \eqref{NSP} around a constant equilibrium state $[n,u]=[1,0]$ with $\phi=0$, namely
\begin{eqnarray*}
&&\pa_t n +\pa_x u=0,\\
&&\pa_t u +A \pa_x n -\pa_x \phi =\pa_x^2 u,\\
&&{\varepsilon} \pa_x^2\phi =n +\phi,
\end{eqnarray*}
where we put a constant ${\varepsilon}>0$ in front of $\pa_x^2\phi$ in order to see what happens as ${\varepsilon}$ tends to zero. The case of two-fluid model \eqref{NSP2} can be considered in a similar way; see \cite{HL} for details.  In fact, taking  the Fourier transform in $x\in \R$ gives
\begin{eqnarray*}
&&\pa_t \widehat{n}+i\xi \widehat{u}=0,\\
&&\pa_t \widehat{u} +A i\xi \widehat{n}-i\xi \widehat{\phi}=-\xi^2 \widehat{u},\\
&&-a(\xi)\widehat{\phi}=\widehat{n},
\end{eqnarray*}
where $a(\xi)={\varepsilon} \xi^2+1$. The direct energy estimate implies
\begin{equation}
\label{ap.1}
\pa_t \left[\left(A+\frac{1}{a(\xi)}\right) |\widehat{n}|^2+|\widehat{u}|^2\right]+2\xi^2 |\widehat{u}|^2=0.
\end{equation}
Moreover, it is also straightforward to compute
\begin{equation*}
\pa_t \lag i\xi \widehat{n},\widehat{u} \rag +(A+\frac{1}{a(\xi)})\xi^2 |\widehat{n}|^2 =\xi^2 |\widehat{u}|^2+\lag i \xi \widehat{n}, -\xi^2 \widehat{u}\rag,
\end{equation*}
which after taking the real part of the equation, applying the Cauchy-Schwarz inequality to the last term and dividing it by $1+\xi^2$, leads to
\begin{equation}
\label{ap.2}
\pa_t \frac{\Re \lag i\xi \widehat{n},\widehat{u} \rag}{1+\xi^2} +\frac{\xi^2}{2(1+\xi^2)}  (A+\frac{1}{a(\xi)}) |\widehat{n}|^2\leq C \xi^2 |\widehat{u}|^2.
\end{equation}
Here $\lag\,,\,\rag$ stands for the complex inner product. Then, it follows from \eqref{ap.1} and \eqref{ap.2} that
\begin{equation}
\label{ap.3}
\pa_t E +\la D\leq 0,
\end{equation}
with
\begin{eqnarray*}
E & =& \left(A+\frac{1}{a(\xi)}\right) |\widehat{n}|^2+|\widehat{u}|^2 +\ka   \frac{\Re \lag i\xi \widehat{n},\widehat{u} \rag}{1+\xi^2},\\
D & = & \xi^2 |\widehat{u}|^2+\frac{\xi^2}{1+\xi^2}  (A+\frac{1}{a(\xi)}) |\widehat{n}|^2,
\end{eqnarray*}
where $\ka>0$ is suitably small. Noticing
\begin{equation*}
E \sim \left(A+\frac{1}{a(\xi)}\right) |\widehat{n}|^2+|\widehat{u}|^2,
\end{equation*}
it further follows from \eqref{ap.3} that
\begin{equation*}
\pa_t E +\frac{\la \xi^2}{1+\xi^2} E \leq 0.
\end{equation*}
Therefore, $E(t,\xi)\leq e^{-\frac{\la \xi^2}{1+\xi^2} t}E(0,\xi)$ holds true, and this directly implies the time-decay property of the linearized solution operator as in \cite{UDK}. Notice that similar to obtain \eqref{ap.3} in the Fourier space,  it also holds in the original space that
\begin{equation*}
\frac{d}{dt} \left( \|(\sqrt{A}n,u)\|_{H^1}^2+\|\phi\|_{H^2}^2 +\ka (u,\pa_x n)\right)
+\la (\|\pa_x n\|^2+\|\pa_x u\|_{H^1}^2 +\|\pa_x\phi\|_{H^2}^2)\leq 0.
\end{equation*}
Finally, we also write down the Green's matrix $G(t,\xi)$ corresponding to the linearized system:
\begin{equation*}
G(t,\xi)=e^{\la_+ t} P_+ +e^{\la_-t} P_-,
\end{equation*}
where
\begin{equation*}
\la_\pm=\frac{-\xi^2\pm \sqrt{\xi^4-4\xi^2 a(\xi)}}{2}
\end{equation*}
are the eigenvalues of the coefficient matrix
\begin{equation*}
M=\begin{pmatrix}
     0 &\,-i\xi  \,  \\[1mm]
    -i\xi a(\xi)  &\, -\xi^2 \,
\end{pmatrix},
\end{equation*}
and
\begin{equation*}
P_\pm=\frac{M-\la_\mp I}{\la_\pm-\la_\mp}
\end{equation*}
are the corresponding eigenprojection,  with $I$ being a $2\times2$ identity matrix. Therefore, it is direct to obtain
\begin{equation*}
G(t,\xi)=\begin{pmatrix}
 \ \dis \frac{\la_+ e^{\la_- t} -\la_- e^{\la_+ t}}{\la_+ -\la_-}  \  &\  \dis-i \xi \frac{e^{\la_+ t} -e^{\la_-t}}{\la_+-\la_-}  \ \\[3mm]
\ \dis-i \xi a(\xi) \frac{e^{\la_+ t} -e^{\la_-t}}{\la_+-\la_-}    \   &\   \dis \frac{\la_+ e^{\la_+ t} -\la_- e^{\la_- t}}{\la_+ -\la_-}\
\end{pmatrix}.
\end{equation*}
Observe that $G(t,\xi)$ must reduce to the Green's matrix in the Fourier space for the one-dimensional Navier-Stokes equations as ${\varepsilon}\to 0$, cf.~\cite{HZ2}, for instance.

\medskip
\noindent {\bf Acknowledgements:} RJD was supported by the General Research Fund (Project No.~400912) from RGC of Hong Kong. SQL was
supported by grants from the National Natural Science Foundation of China under contracts 11471142 and 11271160. The authors would like to thank Xiongfeng Yang for many fruitful discussions on the two-fluid case, and also thank the anonymous referee for very helpful comments on the paper.


\end{document}